\documentclass[12pt,reqno]{amsart}
\RequirePackage[OT1]{fontenc}
\RequirePackage{amsthm,amsmath,bbm, array, graphicx, tikz, tikz-cd}
\RequirePackage[numbers]{natbib}
\RequirePackage[colorlinks,linkcolor=blue,citecolor=blue,urlcolor=blue]{hyperref}
\usepackage{natbib}
\usepackage{atbegshi}
\usepackage{amssymb}
\usepackage{anysize}
\usepackage{hyperref}
\usepackage{extarrows}
\usepackage{multirow}
\usepackage{natbib}
\usepackage{stmaryrd}
\usepackage{color}
\usepackage{amsmath}
\usepackage{enumitem}
\usepackage{mathtools} 
\usepackage{dsfont} 
\usepackage{verbatim}
\usepackage{amsfonts}
\usepackage{amsmath}
\usepackage{amssymb}
\usepackage{amscd}
\usepackage{amsthm}
\usepackage{latexsym,bm}
\usepackage{pstricks}
\usepackage{geometry}
\usepackage{dsfont}
\usepackage{bigints}
\usepackage{bm}
\usepackage{soul}
\usepackage{amsfonts}
\makeatletter
\def\amsbb{\use@mathgroup \M@U \symAMSb}
\makeatother

\usepackage{bbold}

\numberwithin{equation}{section}
\theoremstyle{plain} 
\newtheorem{theorem}{Theorem}[section]
\newtheorem{lemma}[theorem]{Lemma}
\newtheorem{corollary}[theorem]{Corollary}
\newtheorem{proposition}[theorem]{Proposition}

\theoremstyle{definition}
\newtheorem{example}[theorem]{Example}
\theoremstyle{remark}
\newtheorem{remark}[theorem]{Remark}

\newtheorem{notation}[theorem]{Notation}
\newtheorem{question}[theorem]{Question}

\newcommand{\catvec}{\textbf{Vec}}
\renewcommand{\P}{\mathcal{P}}

\newcommand{\Hom}{\text{Hom}}
\renewcommand{\Vec}{\textbf{Vec}}
\newcommand{\1}{\textbf{1}}
\newcommand{\bbC}{\mathbb{C}}
\newcommand{\bbQ}{\mathbb{Q}}

\def\be{\begin{equation}}
\def\ee{\end{equation}}
\def\ba{\begin{eqnarray*}}
\def\ea{\end{eqnarray*}}
\def\bae{\begin{eqnarray}}
\def\eae{\end{eqnarray}}
\def\bc{\begin{center}}
\def\ec{\end{center}}

\def\D{{\bm D}}

\def\sl{\mathfrak{sl}}

\def\Fus{\mathrm{Fus}}
\def\C{\mathcal{C}}
\def\D{\mathcal{D}}

\def\Z{\mathbb{Z}}

\def\Eq{\text{Eq}}

\def\<{\langle}
\def\>{\rangle}

\def\ea{e_{0}(\lambda)}

\allowdisplaybreaks
  
\marginsize{28mm}{28mm}{28mm}{29mm}  

\begin{document}

\title{Reconstructing Braided Subcategories of $SU(N)_k$}
\author{Zhaobidan Feng}
\author{Shuang Ming}
\author{Eric C. Rowell}
\maketitle
\begin{abstract}
    Ocneanu rigidity implies that there are finitely many (braided) fusion categories with a given set of fusion rules. While there is no method for determining all such categories up to equivalence, there are a few cases for which can.  For example, Kazhdan and Wenzl described all fusion categories with fusion rules isomorphic to those of $SU(N)_k$.  In this paper we extend their results to a statement about braided fusion categories, and obtain similar results for certain subcategories of $SU(N)_k$.  
\end{abstract}
\section{Introduction} 
The purpose of this article is to study certain subtleties on the problem of classifying braided fusion categories with a fixed set of fusion rules.  Some of our results are possibly well-known to experts, but have not been carefully written down.  We assume the reader is familiar with the basic notions of the theory of fusion categories, taking \cite{EGNO} as a basic reference.

In explicit classifications of braided fusion categories (eg. \cite{rowellstongwang,BNRW}) one is often confronted with the following question: if $\C$ and $\D$ have the same fusion rules (i.e. are \emph{Grothendieck equivalent} \cite{RowellJPAA}), are they related in some explicit way?  By (braided) Ocneanu rigidity \cite[Theorem 2.31]{eno} there are finitely many (braided) fusion categories with the same fusion rules as a given one, but this does not provide a classification or even an enumerative bound up to equivalence.  Often it is desirable to have such an enumeration, for example in categories appearing in applications to condensed matter physics and quantum computation \cite{rowellwangbulletin}.

There are two straightforward ways to construct potentially inequivalent fusion categories from a given category $\C$.  

Firstly, it is a consequence of results of \cite{eno} that any 
fusion category over $\bbC$ can be defined over an algebraic 
extension $K$ of $\bbQ$.  In \cite{davidovitchhaggewang} it is carefully shown that the axioms of a (braided, ribbon) fusion category can be expressed as structure constants satisfying algebraic equations so that for any Galois automorphism $\sigma$ one may define a (braided, ribbon) fusion category $\sigma(\C)$ by applying $\sigma$ to the structure constants.  Since the fusion coefficients are rational integers, the fusion rules of $\C$ and $\sigma(\C)$ are the same.  

If $\C$ is a faithfully $G$-graded fusion category with associativity constraint $\alpha$ then for any $3$-cocycle $\omega\in Z^3(G,U(1))$ we may obtain a new fusion category $\C^\omega$ by twisting $\alpha$ by $\omega$ on homogeneous components:
$$\alpha_{X,Y,Z}^\omega=\omega(\deg(X),\deg(Y),\deg(Z))\alpha_{X,Y,Z}.$$ Indeed, the pentagon axioms correspond exactly to the cocycle condition and twisting by cohomologous 3-cocycles yield monoidally equivalent categories.  

In some situations these two constructions suffice to describe all categories with given fusion rules.  For example, any fusion category with fusion rules like $Vec_G$ for a finite group $G$ is of the form $Vec_G^\omega$.  
The results of Kazhdan and Wenzl \cite{kazdanwenzl} show that the same is true for fusion categories with the same fusion rules as the $\Z_N$-graded fusion categories $SU(N)_k$ obtained from quantum groups $U_q\mathfrak{sl}_N$ for $q=e^{\pi i/(N+k)}$.  They show that if $\C$ has fusion rules like $SU(N)_k$ then $\C$ is a twist of the fusion category $\Fus(\C(\sl_N,N+k,\tilde{q}))$ obtained from $U_{\tilde{q}}\mathfrak{sl}_N$ where $\tilde{q}$ is another root of unity of the same order as $q$. The results mentioned above make it clear that $q\rightarrow \tilde{q}$ can be implemented by a Galois automorphism. Their approach is fairly technical and uses the relationship between the Hecke algebras $\mathcal{H}_n(q)$ and the centralizer algebras in $SU(N)_k$ in an essential way. 

The categories $SU(N)_k$ admit a further structure and properties: they are non-degenerate braided fusion categories.
Moreover $SU(N)_k$ has a well-studied factorization into braided subcategories $\mathcal{M}SU(N)_k\boxtimes\C(\Z_m,P)$: here $\C(\Z_m,P)$ is a pointed modular category with fusion rules like $\Z_m$ where $N/m$ is the largest factor of $N$ coprime to $k$, and $\mathcal{M}SU(N)_k$ is the centralizer of $\C(\Z_m,P)$.  In the case $\gcd(N,k)=1$ the category $\mathcal{M}SU(N)_k$ is often denoted $PSU(N)_k$, see e.g. \cite{bruguieres}.

The motivating question for this paper is:
\begin{question}  Can we classify braided fusion categories with the same fusion rules as $SU(N)_k$ or $\mathcal{M}SU(N)_k$ up to braided equivalence?
\end{question}
We shall be particularly interested in describing all \emph{non-degenerate} braidings on such categories.

For a fixed braided fusion category  $\C$ one can also change the braiding in a number of ways.  Firstly, one may always reverse the braiding to obtain a (potentially) new braided fusion category $\C^{rev}$ with the same underlying fusion category: the braiding on $\C^{rev}$ is defined to be $\tilde{c}_{X,Y}:=(c_{Y,X})^{-1}$ where the braiding on $\C$ is given by $c$.  If $\C$ is a braided faithfully $G$-graded fusion category and $\chi:G\times G\rightarrow U(1)$ is a bicharacter then we we can equip the underlying fusion category $\Fus(\C)$ with a (potentially) new braiding by defining on homogeneous objects $X,Y$
$$c^\chi_{X,Y}=\chi(\deg(X),\deg(Y))c_{X,Y}.$$  The proof that this is valid is essentially by inspecting the hexagon equations, and goes back to Joyal and Street at least in some cases \cite{JS}.   
Finally, for any tensor autoequivalence $\phi$ of $\C$, the image of the braiding on $\C$ under $\phi$ is a braiding on the underlying fusion category of $\C$.  In particular if $\phi$ is not braided (recall \cite{EGNO} that being braided is a property autoequivalences may or may not have)  
then we obtain a (potentially new) braiding on the fusion category $\Fus(\C)$.

It is natural to ponder the possibility of first twisting the associativity on a $G$-graded fusion category $\C$ and then changing the braiding correspondingly. This leads to the notion of \emph{abelian $3$-cocycles} $(\omega,\chi)$ (see \cite[Exercise 8.4.3]{EGNO}), where $\omega$ is a $3$-cocycle as above, and $\chi:G\times G\rightarrow U(1)$ is a function (not necessarily a bicharacter, unless $\omega$ is trivial).  The pentagon and hexagon axioms constrain $\omega$ and $\chi$ significantly--for example, if $|G|$ is odd then the only abelian $3$-cocycles have $\omega$ trivial.  This can be regarded as a special case of (braided) \emph{zesting} introduced in \cite{zesting}.

For completeness, we mention that there is an additional structure on $SU(N)_k$: they are spherical (non-degenerate) braided fusion categories with canonical spherical structure coming from the standard ribbon twist.  This structure may also be changed--for a non-degenerate braided fusion category that admits a spherical structure other spherical structures are in one-to-one correspondence with self-dual invertible objects \cite{BNRW}.  The same is true for the non-degenerate subcategories $\mathcal{M}SU(N)_k$.

We fully answer the question above.  In particular, we describe all \emph{braided} fusion categories with the fusion rules like those of the modular categories $SU(N)_k$ and of the modular subcategories $\mathcal{M}SU(N)_k$.  

The structure of this paper is as follows. In section 2, we lay out the basic definitions and general results about braided tensor categories. We also fix the notations that will be used in the sequel sections. In section 3, we introduce basic properties of the categories $\C(\sl_N,N+k,q)$. In section 4, we classify all possible braidings over $\C(\sl_N,N+k,q)$. In section 5, we classify all possible braidings over categories obtained from $\C(\sl_N,N+k,q)$ by twisting the associativity constraints, we also classify all possible braidings over fusion categories with the same fusion rules as certain subcategories of $SU(N)_k$.

\section{Preliminaries}
In this section, we fix notations coming from the general theory of braided fusion categories. We refer readers to \cite{EGNO} for more details.

A fusion category over $\mathbb{C}$ is a $\mathbb{C}$-linear semisimple rigid monoidal category with finitely many isomorphism classes of simple objects, finite-dimensional Hom-spaces. We denote $\mathcal{O}(\C)$ be the set of all isomorphism classes of simple objects of $\C$. An object $X$ is said pointed if the evaluation morphism $X^{*}\otimes X\rightarrow \1$ is an isomorphism. A fusion category is said to be pointed if all simple objects are pointed. Given a fusion category $\C$, taking the fusion subcategory that generated by pointed objects form a pointed fusion category.

We denote the pointed fusion category of all $\bbC$-vector spaces by $\Vec$.

Let $G$ be a finite group. A monoidal category $\C$ is $G$-graded if $\C\cong \bigoplus_{g\in G}\C^g$ as abelian categories and $\C^g\otimes \C^h\subset \C^{gh}$. In this case there is a function $\deg:\mathcal{O}(\C)\rightarrow G$ given by $\deg(X)=g$ if $X\in\C^g$. In particular, if an object $Z$ is a subobject of the tensor product of simple objects $X\otimes Y$ then $\text{deg}(Z)=\text{deg}(X)\text{deg}(Y)$. 
We say the grading is faithful if deg is surjective.  Notice that the grading only depends on the fusion rules.

\begin{example} A pointed fusion category, i.e., one in which all simple objects are invertible under $\otimes$ are automatically $G$-graded, where $G$ is the group of isomorphism classes of simple objects with product induced by $\otimes$.
The category of $G$-graded vector spaces is a pointed fusion category with $G$ grading. We denote it by $\Vec_{G}$.  The related pointed fusion category $\Vec_G^\omega$ is obtained by twisting the associativity on $\Vec_G$ by a 3-cocycle $\omega$.  Pointed braided fusion categories are classified by pre-metric groups and the underlying group is abelian, see \cite{EGNO}.
\end{example}

Let $\C$ be a braided fusion category with braiding $c_{X, Y}$. We say two objects $X$ and $Y$ centralize each other if $c_{Y, X}\circ c_{X, Y}=\textbf{id}_{X\otimes Y}$, and projectively centralize each other if 
$$c_{Y, X}\circ c_{X, Y}=b_{X, Y}\textbf{id}_{X\otimes Y}$$ for some scalar $b_{X,Y}$.
In particular, pointed objects always projectively centralize simple objects.

\section{Universal Grading and Decomposition of $SU(N)_k$}
We briefly describe some of the relevant notation for the categories $Rep(SL(N))$ of complex $SL(N)$ representations and $SU(N)_k$ the modular fusion category associated with quantum groups of Lie type $A_{N-1}$ the specific root of unity $q=e^{\pi i/(N+k)}$.  For more complete details we refer to \cite{bakalovkirillov,kazdanwenzl,Wenzl1988}.
\subsection{Combinatorial Data} 
The monoidal category of complex $SL(N)$-representations is semisimple: the isomorphism classes of simple objects are parametrized by the set $\Lambda_N$ of Young diagrams $\lambda$ with at most $N-1$ rows.  These are either written row-wise as $(m_1,\ldots,m_{N-1})$ where the weakly decreasing $m_i$ represent the number of boxes in the $i$th row, or as $[\lambda_1,\ldots,\lambda_j]$ where $\lambda_i$ represents the number of boxes in the $i$th column, with $\lambda_j$ the last non-empty column, unless $\lambda_1=0$. For instance $X_{[0]}=X_{(0,\ldots,0)}$ denotes the unit object corresponding to the trivial representation. The object labeled by a single box $X_{[1]}=X_{\Box}$ is the generating object which corresponds to the $N$-dimensional fundamental representation. In the rest of the paper, we will use $X$ to denote the generating object $X_{[1]}$ for simplicity.

The fusion rules of $Rep(SL(N))$ satisfy
$$X\otimes X_\mu\cong\bigoplus_{\lambda=\mu+\Box} X_\lambda,$$ where $\lambda=\mu+\Box$ indicates that $\lambda$ is obtained from $\mu$ by adding one box to any row/column of $\mu$, with the convention that if $\mu$ has $N-1$ rows, then instead of adding one box to the first column one deletes the first column. For example the object $X_{[1^{N-1}]}$ labeled by a column of $N-1$ boxes is the dual object to $X$, since $X_{[0]}\subset X\otimes X_{[1^{N-1}]}$. The Grothendieck semi-ring of this fusion category is a based $\Z_+$-ring with basis parametrized by Young diagrams with at most $N-1$ rows, and the product is the obvious one coming from the tensor product.

\subsection{Fusion categories} The braided fusion categories $SU(N)_k$ are obtained as a subquotients of the categories $Rep(U_q\mathfrak{sl}_N)$ with $q=e^{\pi i/(N+k)}$, see e.g., \cite{bakalovkirillov} for details. The fusion rules of $SU(N)_k$ are truncated versions of those presented above for representations of $SL(N)$. To be precise, the fusion rules $SU(N)_k$ can be described as follows: we restrict to the objects with at most $k$ columns. 

\subsection{Universal Grading} There is an universal $\Z_N$ grading on both the representation categories of $SL(N)$ and the associated fusion categories $SU(N)_k$ by counting the number of boxes of Young diagrams mod $N$. For instance, the generating object $X$ is of grade $1$ and the trivial object $X_{[0]}$ is of grade $0$. 

There are (braided) fusion subcategories coming from  the universal grading. Suppose $H$ is a subgroup of $\Z_N$, then $\bigoplus_{h\in H} (SU(N)_k)^h$ is a (braided) fusion subcategory.  
\subsection{Pointed Subcategory} There are exactly $N$ invertible objects in $SU(N)_k$. The corresponding Young diagrams are those of rectangular shape $i\times k$, where $0\le i\le N-1$. If $\C$ is a braided fusion category with the same fusion rules as $SU(N)_k$  the braided monoidal structure restricts to the pointed subcategory $\P(\C)$ of $\C$. The braiding and monoidal structures of pointed categories are completely classified, see \cite{EGNO} Section 8.4. In our cases, in order to be braided, the pointed subcategory $\P(\C)$ must be monoidally equivalent to $\catvec_{\Z_N}^{\omega}$, where
\begin{enumerate}
    \item for $N$ even either $$\omega(i, j, \ell)=\begin{cases} 1 & i+j<N\\
(-1)^{\ell} & i+j\geq N \end{cases}$$
or [$\omega$] is homologically trivial, and
\item $[\omega]$ is homologically trivial if $N$ is odd.
\end{enumerate}
We put the concrete computation in the appendix. In $SU(N)_k$. We denote the pointed simple object $X_{[k]}$ by $g$. 

\subsection{Decomposition Formula} One can derive fusion subcategories of $SU(N)_k$ from both the grading and the pointed objects. However, not all of them split as a direct (Deligne) product of braided fusion categories, or even as fusion categories.  
The following proposition shows that any braided fusion category $\C$ with the same fusion rules as $SU(N)_k$ does have such a decomposition, which is maximal in a certain sense.  For such a $\C$, let $m$ be the largest divisor of $N$ that is relatively prime to $k$, and set $n=N/m$. In some literature, they define $n=\text{gcd}(N, k^{\infty}):=\lim_{i\rightarrow \infty}\text{gcd}(N, k^{i})$.  Furthermore, denote by $\mathcal{M}\C=\bigoplus_{i=0}^{n-1}\C^{im}$ the fusion subcategory of $\C$  generated by the $mj$-graded components (i.e. corresponding to the subgroup $m\Z_N<\Z_N$), and by $\C(\Z_{m},P)$ the pointed subcategory generated by $g^n$. $\C(\Z_m,P)$  has rank $m$ since $ni< N$ for $i<m$. Notice that since $g$ lies in the ${k}$-graded component, $g^n$ lies in the $nk\pmod{N}$ component. The intersection of the two fusion subcategories is trivial since $n$ and $m$ are relatively prime.

\begin{proposition}\label{decom}
 A braided monoidal category $\C$ with the same fusion rules as $SU(N)_{k}$ admits a braided tensor decomposition (in the notation above):
\begin{equation}
\label{decomp}
 \C=\mathcal{M}\C\boxtimes\C(\Z_m,P).
\end{equation}
\end{proposition}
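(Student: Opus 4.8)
The plan is to establish Equation~\eqref{decomp} by exhibiting $\mathcal{M}\C$ and $\C(\Z_m,P)$ as mutually centralizing fusion subcategories whose intersection is trivial, and then invoke the standard fact (see \cite{EGNO}, or the M\"uger-style characterization of Deligne products inside a braided category) that a braided fusion category which contains two subcategories $\mathcal{A},\mathcal{B}$ with $\mathcal{A}\cap\mathcal{B}=\Vec$ and $\mathcal{A}\subseteq \mathcal{B}'$ (mutual centralizers), and with $\mathrm{FPdim}(\mathcal{A})\,\mathrm{FPdim}(\mathcal{B})=\mathrm{FPdim}(\C)$, splits as $\mathcal{A}\boxtimes\mathcal{B}$. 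So the proof reduces to three verifications: (i) the Frobenius--Perron dimension count $\mathrm{FPdim}(\mathcal{M}\C)\cdot\mathrm{FPdim}(\C(\Z_m,P))=\mathrm{FPdim}(\C)$; (ii) triviality of the intersection $\mathcal{M}\C\cap\C(\Z_m,P)$; and (iii) that the two subcategories centralize each other in the braiding.

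For (i), note that $\mathrm{FPdim}$ depends only on the fusion rules, so it may be computed in $SU(N)_k$ itself. The universal grading group is $\Z_N$, so $\mathrm{FPdim}(\C^h)=\mathrm{FPdim}(\C)/N$ for each $h$, whence $\mathrm{FPdim}(\mathcal{M}\C)=\sum_{i=0}^{n-1}\mathrm{FPdim}(\C^{im})=\frac{n}{N}\mathrm{FPdim}(\C)=\frac{1}{m}\mathrm{FPdim}(\C)$; and $\C(\Z_m,P)$ is pointed of rank $m$, so $\mathrm{FPdim}(\C(\Z_m,P))=m$. Multiplying gives the claim. For (ii), the intersection is a fusion subcategory contained in the pointed part $\C(\Z_m,P)$, hence pointed, generated by some power of $g^n$; since its objects also lie in $\mathcal{M}\C=\bigoplus_i\C^{im}$ and $g^n$ sits in degree $nk\bmod N$, any invertible object $g^{nj}$ in the intersection has degree $njk\equiv 0\pmod m$, forcing $m\mid jk$; as $\gcd(m,k)=1$ this forces $m\mid j$, i.e. $g^{nj}=\1$ because $g$ has order $N=nm$. (This is exactly the coprimality remark already made in the paragraph preceding the proposition, so I would just cite it.)

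The heart of the argument, and the step I expect to be the main obstacle, is (iii): showing $\mathcal{M}\C$ and $\C(\Z_m,P)$ genuinely \emph{centralize} each other, not merely projectively centralize. By the remark in Section~2, a pointed object always projectively centralizes any simple object, so for each simple $Y\in\mathcal{M}\C$ we have $c_{Y,g^n}c_{g^n,Y}=b_{g^n,Y}\,\mathbf{id}$ for a scalar $b_{g^n,Y}$; the task is to show $b_{g^n,Y}=1$. The monodromy scalar $b_{g^n,Y}$ is multiplicative in $g^n$ and depends on $Y$ only through its universal grading degree $\deg(Y)\in m\Z_N$ (this uses that $g^n$ is invertible and that the double braiding with an invertible object is a grading-type bicharacter pairing, as in the pointed classification of Section~8.4 of \cite{EGNO}); write it as $\beta(\deg(Y))$ for a homomorphism-like function $\beta$ on $m\Z_N$. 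Since $g^n$ has order $m$, $\beta$ takes values in the $m$-th roots of unity; since $\deg(Y)\in m\Z_N$, $\beta(\deg(Y))$ is an $m$-th power in the relevant pairing and one computes it is a power of a root of unity whose order divides $\gcd(m,\text{something involving }n,k)$. The key point is that the pairing factors through $\Z_N/m\Z_N\cong\Z_m$ in one slot and through $\langle g^n\rangle\cong\Z_m$ in the other, with the pairing value being $\exp(2\pi i\, nk\,\cdot(\deg(Y)/m)\cdot j/N)$ or similar; plugging $\deg(Y)=m\ell$ and using $N=nm$ shows the exponent is $nk\ell j/n\cdot(1/m)\cdot 2\pi i$-type and one checks integrality kills it precisely because $\gcd(n,m)=1$. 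I would organize this by reducing to $\C=SU(N)_k$ for the numerics (legitimate since braiding-with-invertibles data, i.e. the relevant S-matrix entries, is constrained by the fusion rules up to the pointed twists already classified) or, cleaner, by a direct balancing/twist computation: $b_{g^n,Y}=\theta_{g^n\otimes Y}\theta_{g^n}^{-1}\theta_Y^{-1}$ when a ribbon structure is available, and argue via the grading that this equals $1$. Either way the arithmetic nub is the coprimality $\gcd(m,n)=\gcd(m,k)=1$, which is exactly why $m$ was chosen maximal coprime to $k$.

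Once (i)--(iii) are in hand, the Deligne factorization $\C\cong\mathcal{M}\C\boxtimes\C(\Z_m,P)$ as braided fusion categories follows formally: the tensor functor $\mathcal{M}\C\boxtimes\C(\Z_m,P)\to\C$, $A\boxtimes B\mapsto A\otimes B$, is well-defined and braided because the two subcategories centralize each other, it is fully faithful because the intersection is trivial, and it is dominant (hence an equivalence) by the dimension count in (i). I would close by noting that this reproves, for this class of $\C$, the decomposition $SU(N)_k=\mathcal{M}SU(N)_k\boxtimes\C(\Z_m,P)$ recalled in the introduction, now with $SU(N)_k$ replaced by any Grothendieck-equivalent braided $\C$.
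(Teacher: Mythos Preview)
Your overall architecture matches the paper's: exhibit $\mathcal{M}\C$ and $\C(\Z_m,P)$ as fusion subcategories, show they centralize each other, and conclude that the tensor functor $\mathcal{M}\C\boxtimes\C(\Z_m,P)\to\C$ is a braided equivalence. The paper establishes the fusion-level equivalence by directly checking that tensor product induces a bijection on simple objects (via the Chinese remainder isomorphism $\Z_n\times\Z_m\cong\Z_N$ and the fact that the $\C(\Z_m,P)$ factor is invertible), rather than via your FP-dimension count plus M\"uger's criterion; either route is fine.

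The issue is in your step (iii). You actually have the complete argument in your second sentence there and do not seem to realize it: $b_{g^n,Y}$ is multiplicative in both slots, so if one sets $t:=b_{X,g}$ then $b_{Y,g^i}=t^{\,i\cdot\deg(Y)}$ for every simple $Y$, and $t^N=b_{X,g^N}=b_{X,\1}=1$. For $Y\in\mathcal{M}\C$ one has $\deg(Y)=m\ell$, so $b_{Y,g^n}=t^{\,n\cdot m\ell}=t^{N\ell}=1$. That is exactly the paper's computation, and it uses nothing beyond ``$g^N=\1$'' and functoriality of the braiding. Your phrase ``$\beta$ takes values in the $m$-th roots of unity; since $\deg(Y)\in m\Z_N$, $\beta(\deg(Y))$ is an $m$-th power'' is this argument in disguise: an $m$-th root of unity raised to the $m$-th power is $1$, full stop. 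No $\gcd$ with ``something involving $n,k$'' is needed, and no explicit exponential formula for the pairing is available or required.

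By contrast, the two alternative strategies you then propose are not valid for an arbitrary braided $\C$ Grothendieck equivalent to $SU(N)_k$. You cannot ``reduce to $\C=SU(N)_k$ for the numerics'': the monodromy scalars $b_{X,g}$ genuinely depend on the braiding, not just on the fusion rules (indeed, later in the paper the value of $t=b_{X,g}$ varies over all primitive $N$-th roots of unity as the braiding varies). And you cannot appeal to a ribbon twist identity $b_{g^n,Y}=\theta_{g^n\otimes Y}\theta_{g^n}^{-1}\theta_Y^{-1}$, since the hypothesis is only that $\C$ is braided, not spherical or ribbon. Drop both of those detours and keep the two-line computation above.
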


\begin{proof}

It is clear by the construction above that $\mathcal{M}\C$ and $\C(\Z_m,P)$ are fusion subcategories.
Since $\C$ is braided, one has a monoidal functor 
$$F:\mathcal{M}\C\boxtimes\C(\Z_{m},P)
\rightarrow \C$$
given by the tensor product.
We will verify $F$ is an equivalence of fusion categories by showing that $F$ induces a bijection on the set of simple objects. By the Chinese remainder theorem, there exists a group isomorphism $f: \Z_{n}\times \Z_{m}\rightarrow \Z_{N}$, this proves the injection since the simple objects in $\C(\Z_m,P)$ are all invertible. For the surjection, let $V$ be a simple object in $\C$ of grade $\ell$, and $f(i,j)=\ell$. In another words, $$im+jn=\ell\pmod{N}.$$ Then 
$V=(V\otimes g^{-jn})\otimes g^{jn}$
where $V\otimes g^{-jn}$ is of grade $in$, thus an object is $\mathcal{M}\C$, and $g^{jn}$ is an object in $\C(\Z_m,P)$.

To show $F$ is an equivalence of \emph{braided} tensor categories, we need to show the pointed factor $\C(\Z_m,P)$ centralizes  $\mathcal{M}\C$.

Let $X$ and $g$ be the simple objects defined above. Since the pointed objects centralize all simple objects projectively, the quantity $b_{Y,g^i}$ characterizes the braiding of a pointed object $g^i$ and a simple object $Y$, where $b_{Y,g^i}$ is defined via
$$c_{g^i, Y}\circ c_{Y, g^i} =b_{Y, g^i}\textbf{id}_{Y\otimes g^{i}}.$$
By the functoriality of braiding, the pointed objects projectively centralize $X^{\otimes j}$ for all $j$, with $b_{X^{{\otimes}j}, g^i}=b_{X, g}^{ij}$. Since $X$ is a generating object. The quantity $b_{Y, g^i}$ is determined by $b_{X, g}$ for all simple object $Y$. To be specific, suppose $b_{X, g}=t$ and $Y\in \C^j$, then $b_{Y, g^i}=t^{ij}$.

Since the identity object $\1$ centralizes all object in $\C$, we have
$$b_{X, \1}=b_{X, g^N}=t^N=1.$$

Now we prove $\C(\Z_m,P)$ centralizes all objects in $\mathcal{M}\C$. The pointed subcategory $\C(\Z_m,P)$ is generated by $g^{n}$. Let $Y$ be a simple object in $\C^{is}$. Then
$$b_{Y, g^{n}}=t^{(im)(n)}=t^{Ni}=1.$$
Thus the generating object $g^{n}$ centralizes all objects in $M\C$, therefore the same holds all other objects in $\C(\Z_m,P)$.
\end{proof}

With the notation established above we immediately have the following:
\begin{corollary}\label{unique}
The braiding over $\C$ is uniquely determined by a braiding over $\mathcal{M}\C$ and a braiding over $\C(\Z_m,P)$.
\end{corollary}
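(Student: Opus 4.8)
The plan is to deduce Corollary~\ref{unique} directly from Proposition~\ref{decom} by combining the Deligne-product decomposition with the functoriality of braidings under braided tensor equivalences. First I would recall the precise setup: Proposition~\ref{decom} gives a braided tensor equivalence $F\colon \mathcal{M}\C\boxtimes\C(\Z_m,P)\xrightarrow{\ \sim\ }\C$, and in a Deligne product $\mathcal{A}\boxtimes\mathcal{B}$ of braided fusion categories the braiding is determined componentwise: on simple objects $A\boxtimes B$ and $A'\boxtimes B'$ one has $c^{\mathcal{A}\boxtimes\mathcal{B}}_{A\boxtimes B,\,A'\boxtimes B'} = c^{\mathcal{A}}_{A,A'}\otimes c^{\mathcal{B}}_{B,B'}$ under the canonical identification $(A\boxtimes B)\otimes(A'\boxtimes B')\cong (A\otimes A')\boxtimes(B\otimes B')$ (see \cite{EGNO}). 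Hence the braiding on $\mathcal{M}\C\boxtimes\C(\Z_m,P)$ is literally built out of the braiding on each factor, with no additional data; transporting along $F$, the braiding on $\C$ is recovered.

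The key steps, in order, would be: (1) invoke Proposition~\ref{decom} to fix the braided equivalence $F$; (2) observe that a braided equivalence transports the braiding on the source faithfully, so two braidings on $\C$ agreeing on $\mathcal{M}\C$ and on $\C(\Z_m,P)$ (and compatible with the grading decomposition) pull back to two braidings on $\mathcal{M}\C\boxtimes\C(\Z_m,P)$ agreeing on both tensor factors; (3) use the componentwise formula for the braiding on a Deligne product to conclude those pulled-back braidings coincide, hence so do the original ones on $\C$. Conversely, any choice of a braiding on $\mathcal{M}\C$ together with a braiding on $\C(\Z_m,P)$ assembles via the componentwise formula into a braiding on the Deligne product, which $F$ transports to a braiding on $\C$; this shows the correspondence is not merely injective but realizes every pair, though the corollary as stated only asserts uniqueness.

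The one point that requires a little care---and the main obstacle---is making precise that every braiding on $\C$ arises from the Deligne decomposition in a way that respects the factorization, i.e. that the simple object $X\otimes g^{-jn}$ really does land in $\mathcal{M}\C$ and $g^{jn}$ in $\C(\Z_m,P)$ for \emph{any} braiding, not just the canonical one. But this is purely a statement about fusion rules and the universal grading, which is braiding-independent, so the decomposition of objects in the proof of Proposition~\ref{decom} is valid for every braided structure on $\C$; the argument there already shows the pointed factor centralizes $\mathcal{M}\C$ for any such braiding, which is exactly what is needed to identify the transported braiding with the componentwise one. Thus the corollary follows with essentially no further computation, and I would present it in two or three sentences referring back to Proposition~\ref{decom} and the standard description of braidings on Deligne products.
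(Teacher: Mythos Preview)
Your proposal is correct and matches the paper's approach: the paper gives no explicit proof, simply stating that the corollary follows ``immediately'' from Proposition~\ref{decom}, and your argument spells out precisely why---the Deligne-product braiding is componentwise and the decomposition of Proposition~\ref{decom} holds for any braiding since it depends only on the fusion rules and grading. Your observation that the correspondence is in fact a bijection (not merely injective) is also apt, since the paper later uses the corollary in that stronger form to count braidings.
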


\begin{remark}
The pointed factor $\C(\Z_m,P)$ is maximal in the sense that one can not find a braided tensor decomposition of $\C$ such that the pointed factor is larger than $\C(\Z_m,P)$. 
\end{remark}

\begin{notation}
In the rest of the paper, we will denote by $\mathcal{M}\C$ the non-pointed factor in the decomposition of a braided fusion category with the same fusion rules as $SU(N)_k$ and  $\C(\Z_m,P)$ to denote the (maximal) pointed factor for convenience.
\end{notation}

\subsection{Autoequivalences of $SU(N)_k$}
The (braided) monoidal autoequivalences of the category $SU(N)_k$ are classified by  Edie-Michell \cite{ediesimple,edie2020auto} and  Gannon \cite{gannon2002automorphisms}.
Gannon first classified all the automorphisms of the fusion rings of $SU(N)_k$. They are generated by two types of automorphisms, namely:

\begin{enumerate}
\item \textit{charge conjugation} that interchanges the classes $[X]$ and $[X^*]$
\item \textit{simple current automorphisms} that sends the generating object class $[X]$ to $[X\otimes g^{a}]$ for any $a$ such that $1+ka$ is coprime to $N$.
\end{enumerate}

Then Edie-Michell \cite{ediesimple} showed that all such fusion ring autoequivalences can be realized  uniquely as a monoidal equivalence of $SU(N)_k$, and determined when they are braided. In particular, charge conjugation always induces a {braided} monoidal equivalence of $SU(N)_k$. The simple current autoequivalences may change the braiding or not. In particular, if we apply an autoequivalence that is not braided we obtain a new braiding on our category.  The groups of (braided) simple current autoequivalences is  given in Table \ref{EMtheorem}, in which $m$ is the largest factor of $N$ coprime to $k$ and $n=N/m$, $p$ is the number of distinct odd primes that divide $N$ but not $k$, and 
$$t=\begin{cases} 0, & N \text{ is odd,}\\ 0 & N \text{ is even and } k \equiv 0 (\text{mod } 4), \text{ or if } k \text{ is odd and } N \equiv 2 (\text{mod } 4),\\
1, & \text{otherwise}. \end{cases}$$
\begin{table}
\begin{center}
    \begin{tabular}{|c|c|c|}
    \hline
    $SU(N)_k$ &ScEq& BrScEq\\
    \hline
    $k=2, N=2$&$\{e\}$&$\{e\}$\\
        $2$ exactly divides $\gcd(N,k)$ &$\Z_{m}^{*}\times \Z_2\times \Z_{\frac{n}{2}}$ & $\Z_{2}^{p+t}$\\
        otherwise   & $\Z_{m}^{*}\times \Z_{n} $&$\Z_{2}^{p+t}$\\
        \hline
    \end{tabular}
\end{center}\caption{Simple Current Autoequivalences of $SU(N)_k$\label{EMtheorem}}
\end{table}

Part of the results in \cite{edie2020auto} applies to any category $\C$ with the same fusion rules as $SU(N)_k$ provided $\C$ admits a braiding. That is, any simple current automorphism of the fusion ring lifts to an autoequivalence of $\C$.  We do not know if it lifts uniquely: the issue is that we do not know the trivial simple current automorphism is only realized by the identity autoequivalence: there could be non-trivial gauge autoequivalences. On the other hand, we show that all such autoequivalences are braided, see \ref{gauging}, they can be ignored from the perspective of counting distinct braidings.  To see that the results are valid for such a $\C$, we show that the fusion ring automorphism that sends $[X]$ to $[X\otimes g^{a}]$ lifts to an autoequivalences.

Suppose $\C$ admits a braiding, then the braiding equips $\otimes: \C\boxtimes \C\rightarrow \C$ with the structure of a monoidal functor. Then one can restrict the monoidal functor to a subcategory $\C\boxtimes \langle g^{a}\rangle$, the autoequivalence $f$ can be constructed as the following commuative diagram.

$$\begin{tikzcd}
\C\boxtimes \langle g^a\rangle\arrow[r, "\text{Forget}"] &\C\boxtimes \Vec\arrow[d, "p_1"] \\
\C\arrow[u, "s"]\arrow[r, "f"] & \C 
\end{tikzcd}$$

where $s$ is a section functor that sends $X$ to $X\boxtimes g^{-a}$. It is easy to verify that the associator over $\langle g^a\rangle$ is trivial if $[X]\rightarrow [X\otimes g^{a}]$ defines a fusion ring automorphism. which makes the forgetful functor monoidal.

\section{Main Results}

In this section, we let $\Fus(\C(\sl_N,\ell, q))$ denote the monoidal category underlying $\C(\sl_N,\ell, q)$ equipped with the standard (untwisted) associativity constraints. We only discuss the cases with $k\ge 2$, so the category is not pointed. Since $\Fus(\C(\sl_N,\ell, q))$ admits a non-degenerate braiding (Galois conjugation does not change the degeneracy of the $S$-matrix), the results of \cite{EGNO} and \cite{NIKSHYCH18} on classifying braidings over fusion categories can be applied.

Recall that if $\mathcal{C}$ is a non-degenerate braided fusion category then the Drinfeld center of $\mathcal{C}$ is braided equivalent to $\mathcal{C}\boxtimes\mathcal{C}^{rev}$,  where $\mathcal{C}^{rev}$ denotes the braided fusion category $\mathcal{C}$ equipped with the reversed braiding $c_{X, Y}^{rev}:=c_{Y, X}^{-1}$. In particular, the braidings of $\mathcal{C}$ are in one-to-one correspondence with the sections of 
\begin{align*}
\mathcal{C}\boxtimes\mathcal{C}^{rev}&\rightarrow \mathcal{C} \\
X\boxtimes Y &\mapsto X\otimes Y,
\end{align*}
where the associated braiding is defined as
$$\tilde{c}_{X_1\boxtimes Y_1, X_2\boxtimes Y_2}=c_{X_1, X_2}\boxtimes c_{Y_2, Y_1}^{-1}.$$

With the above identification, we can classify all braidings over $\Fus(SU(N)_k)$ when $k\ge 2$. We remark that the classification of braidings over $\Fus(SU(N)_1)$ is well-known as they are all pointed categories with the same fusion rules as $\Vec_{\Z_{N}}$, see \cite{EGNO} and the Appendix for details.

\begin{theorem}
\label{untwist}
For $k\ge 2$, there are exactly $2N$ different braid structures over fusion category $\Fus(\C(\sl_N,k+N, q))$. In particular, $\Fus(\C(\sl_N,k+N,q))$ admits a degenerate braiding if and only if $N$ has an odd prime factor which is relatively prime to $k$.
\end{theorem}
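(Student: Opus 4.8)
The plan is to use the correspondence recalled just above. Write $\C:=\C(\sl_N,k+N,q)$ and $\mathcal{D}:=\Fus(\C)$. Non-degeneracy of $\C$ gives $Z(\mathcal{D})\simeq\C\boxtimes\C^{rev}$, so braidings on $\mathcal{D}$ (up to braided equivalence) correspond to monoidal sections $s$ of $F\colon\C\boxtimes\C^{rev}\to\mathcal{D}$, $V\boxtimes W\mapsto V\otimes W$. I would first record that $F$ preserves Frobenius--Perron dimension and kills no nonzero object (since $F(Y)\otimes F(Y^{*})\supseteq\1$); consequently any monoidal section $s$ satisfies $\mathrm{FPdim}(s(V))=\mathrm{FPdim}(V)$, sends simple objects to simple objects, and is essentially determined by the single object $s(X)$ for the tensor generator $X=X_{[1]}$, since every simple object of $\mathcal{D}$ occurs in some tensor power of $X$.

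Next I would pin down $s(X)$. It is a simple object $V\boxtimes W$ of $\C\boxtimes\C^{rev}$ with $\mathrm{FPdim}(V)\,\mathrm{FPdim}(W)=\mathrm{FPdim}(X)$; since (for $k\ge 2$) $\mathrm{FPdim}(X)$ is the smallest Frobenius--Perron dimension exceeding $1$ occurring in $\C$, and the only simple objects realizing it are the simple-current shifts $Xg^{a}$ and $X^{*}g^{a}$, one of $V,W$ must be invertible and the other such a shift; the requirement $F(s(X))\cong X$ then forces the total simple-current shift to stabilize $X$, cutting the list down further. The claim is that among these finitely many candidates exactly $2N$ extend to a monoidal section, namely $X\mapsto X\boxtimes\1$ (recovering the standard braiding $c$), $X\mapsto\1\boxtimes X$ (recovering $c^{rev}$), and their twists $c^{\chi},(c^{rev})^{\chi}$ by the $N$ bicharacters $\chi(i,j)=\zeta^{ij}$, $\zeta\in\mu_{N}$, of the universal grading group $\Z_{N}$, where $c^{\chi}_{V,W}=\chi(\deg V,\deg W)c_{V,W}$. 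Pairwise inequivalence follows from a braiding invariant, e.g.\ the unordered pair of eigenvalues of $c'_{X,X}$ on the symmetric and antisymmetric summands of $X\otimes X$: it rescales by $\zeta$ under a $\chi$-twist and inverts under $rev$, while every monoidal autoequivalence of $\mathcal{D}$ sends $X$ to some $X^{\pm1}g^{a}$ (by the results of Edie--Michell discussed above), so it only rescales this pair by roots of unity in a controlled way. Here $k\ge 2$ is used to rule out accidental coincidences, and the finitely many small cases (notably $SU(2)_{2}$, whose underlying fusion category is of Ising type and carries exactly $4=2N$ braidings) are checked directly.

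For the degeneracy statement I would use that every symmetric fusion subcategory of $\mathcal{D}$ is pointed, so a braiding $c'$ is degenerate if and only if some nontrivial invertible $g^{N/d}$ ($d\mid N$, $d>1$) is transparent. With $t:=b_{X,g}$ the standard monodromy scalar---a \emph{primitive} $N$th root of unity, since $\C$ is non-degenerate---the double braiding of $g^{N/d}$ with a simple object of universal grade $\ell$ equals $t^{(N/d)\ell}$ for $c$ (hence $t^{-(N/d)\ell}$ for $c^{rev}$) and is multiplied by $\zeta^{2(N/d)k\ell}$ after a $\chi_{\zeta}$-twist; as the grading is faithful, $g^{N/d}$ is transparent for $c^{\chi_{\zeta}}$ iff $(\zeta^{2k}t)^{N/d}=1$, i.e.\ iff $\zeta^{2k}t$ is a non-primitive $N$th root of unity. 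Since $\zeta^{2k}$ ranges over $\mu_{N/\gcd(N,2k)}$ as $\zeta$ ranges over $\mu_{N}$, an elementary additive computation in $\Z_{N}$ (using $\gcd(a,N)=1$ for the exponent $a$ of the primitive root $t$) shows such a $\zeta$ exists if and only if $N$ has a prime factor not dividing $2k$, i.e.\ an odd prime dividing $N$ but not $k$; the same computation with $t^{-1}$ covers the $c^{rev}$ family, which gives the stated criterion.

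The main obstacle is the claim that, among the candidates for $s(X)$ surviving the dimension bound, only the $2N$ listed ones extend to a genuine monoidal section; this is not formal and uses the rigidity of these categories. I expect to organize it via Proposition~\ref{decom} applied to the standard braiding, which gives $\mathcal{D}\cong\Fus(\mathcal{M}\C)\boxtimes\Vec_{\Z_{m}}$ and $\C\boxtimes\C^{rev}\cong(\mathcal{M}\C\boxtimes\mathcal{M}\C^{rev})\boxtimes(\C(\Z_{m},P)\boxtimes\C(\Z_{m},P)^{rev})$ with $\gcd(m,n)=1$, $n=N/m$. Any braiding on $\mathcal{D}$ restricts to a braiding on each factor; the cross-braiding between the two factors is a bimultiplicative pairing $\Z_{n}\times\Z_{m}\to\mathbb{C}^{\times}$, hence trivial by coprimality, and $\Vec_{\Z_{m}}$ centralizes $\mathcal{M}\C$ under any braiding (the computation $t^{N}=1$ from Proposition~\ref{decom}). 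Thus the count of braidings on $\mathcal{D}$ splits as (count over $\mathcal{M}\C$)$\,\times\,$(count over $\Vec_{\Z_{m}}$); the second is the easy pointed count $m$ (a torsor over the bicharacters of $\Z_{m}$), and the first is $2n$ by the section analysis above applied to the non-degenerate category $\mathcal{M}\C$, with its minimal nontrivial-dimension objects playing the role of $X$. Multiplying gives $2n\cdot m=2N$.
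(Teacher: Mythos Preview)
Your approach is essentially the paper's: both count braidings via sections of $\C\boxtimes\C^{rev}\to\C$, both constrain $s(X)$ by observing that one tensor factor must be invertible (the paper says this in one line; your FP-dimension argument is the same observation), and both analyze degeneracy by computing the double braiding of $X$ with powers of $g$ under each candidate braiding. Your formula $(\zeta^{2k}t)^{N/d}=1$ is exactly the paper's $(2ik+1)j\equiv 0\pmod N$ after the change of variables $\zeta=t^{i}$, and the conclusion (an odd prime of $N$ coprime to $k$) drops out identically.

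A few points where you drift from the paper, and where you could tighten things:
\begin{itemize}
\item The parenthetical ``(up to braided equivalence)'' is a slip: the theorem counts braid \emph{structures}, and the Nikshych bijection is literally between braidings and sections. Once you know the $2N$ objects $Xg^{i}\boxtimes g^{-i}$ and $g^{-i}\boxtimes Xg^{i}$ are pairwise non-isomorphic (immediate for $k\ge 2$ since $X$ is not invertible), you are done---no need to invoke eigenvalue invariants or the autoequivalence group. Your eigenvalue discussion is correct but superfluous for this count.
\item Where the paper is terse is in asserting that all $2N$ candidates actually \emph{are} realized by sections. Your explicit construction of $2N$ braidings as the bicharacter twists $c^{\chi}$ and $(c^{rev})^{\chi}$, together with the easy check that these are pairwise distinct as morphisms (compare $c'_{X,X}$ directly; $c_{X,X}^{2}$ is non-scalar for $k\ge 2$), is the clean way to supply this lower bound and is a genuine improvement in exposition.
\item Your proposed alternative via the decomposition $\Fus(\D)\cong\Fus(\mathcal{M}\C)\boxtimes\Vec_{\Z_{m}}$ is circular as stated: you claim the count on $\mathcal{M}\C$ is $2n$ ``by the section analysis above,'' but $\mathcal{M}\C$ is not of the form $\C(\sl_{N'},\ell',q')$, so that analysis does not apply verbatim. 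In the paper the logic runs the other way---the $2N$ count for $\C$ is proved first (Theorem~\ref{untwist}) and the $2n$ count for $\mathcal{M}\C$ is then \emph{deduced} from it (Theorem~\ref{d}). Since your direct argument already gives $2N$, you should simply drop this paragraph.
\end{itemize}
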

\begin{proof}
Let $\C=\Fus(\C(\sl_N,k+N,q))$ equipped with the nondegenerate braiding coming from the Galois conjugation of $SU(N)_{k}$. It suffices to classify all sections of $\C$ in its center $\C\boxtimes \C^{rev}$.

Suppose $s:\C\rightarrow \C\boxtimes \C^{rev}$ is a section. Since $\C$ is tensor generated by $X$, the section $s$ is uniquely determined by its image $s(X)$ in $SU(N)_{k}\boxtimes SU(N)_{k}^{rev}$. Notice that $X$ is simple, one of the factors must be pointed. Thus, $s(X)$ is either of the form $X\otimes g^{i}\boxtimes g^{-i}$ or the opposite $g^{-i}\boxtimes X\otimes g^{i}$ where $0\le i\le N-1$. We hence obtain in total $2N$ choices of braiding. This finishes the proof of the first part of Theorem \ref{untwist}.

Next we check the symmetric center of the corresponding braiding. We only consider the case when $s(X)=X\otimes g^{i}\boxtimes g^{-i}$, the other case is identical. Since the symmetric center of the induced braiding remains pointed(See Corollary 4.5 of \cite{NIKSHYCH18}), it suffices to compute $\tilde{b}_{s(X), s(g)}$, where $\tilde{b}_{Y, h}$ is defined such that
$$\tilde{b}_{Y, h}\textbf{id}_{Y\otimes h}=\tilde{c}_{g, Y}\circ \tilde{c}_{Y, g}$$ in $\C\boxtimes \C^{rev}$.

Let $b_{X, g}=t$ in $\C$. Since the symmetric center of $\C$ is trivial, $t$ is a primitive $N$-th root of unity. Otherwise, there exists $g^{i}\ne \1$ that centralizes $X$, as a result, centralizes the whole category. Notice $g$ appears in $X^{\otimes k}$, then $s(g)=g\otimes g^{ik} \boxtimes g^{-ik}$. Thus we have
\begin{equation}\label{equat}
\begin{split}
\tilde{b}_{s(X), s(g)}&=\tilde{b}_{X\otimes g^i\boxtimes g^{-i}, g^{ik+1}\boxtimes g^{-ik}}\\
&=b_{X\otimes g^i, g^{ik+1}}b_{g^{-i}, g^{-ik}}^{-1}\\
&=b_{X, g^{ik+1}}b_{g^i, g^{ik+1}}b_{g^{-i}, g^{-ik}}^{-1}\\
&=t^{ik+1}t^{ik(ik+1)}t^{-(-ik)(-ik)}\\
&=t^{2ik+1}
\end{split}
\end{equation}
Since $t$ is a primitive $N$-th root of unity, $g^j$ is in the symmetric center if and only if 
\begin{equation}\label{e1}
(2ik+1)j\equiv 0 \mod N.     
\end{equation}

Now we are ready to prove the second part of Theorem \ref{untwist}. Notice that $2ik+1$ is relatively prime to $2$ and all the common prime factors of $N$ and $k$. This proves the 'only if' part. To prove the 'if' part, we construct a degenerate braiding over the underlying fusion category. Let $m$ be the maximal \emph{odd} divisor of $N$ that relatively prime to $k$ as in Section 3.5. According to Proposition \ref{decom},  $\C$ has a braided factor $\C(\Z_{m},P)$ with trivial associativity constraint. In particular this factor admits a symmetric braiding. By choosing this symmetric braiding over $\Fus(\C(\Z_{m},P))$ , we construct a degenerate braiding over $\C$. We hence finish the proof of Theorem \ref{untwist}.

\end{proof}






\section{Generalizations}
\label{twisted}
\subsection{Group Cohomology}
In \cite{kazdanwenzl}, Kazhdan and Wenzl classified all monoidal structures over categories with the same fusion rules as $SU(N)_k$. Different monoidal structures can be obtained by twisting the associators by (a cocycle representative of) a class in the third cohomology group of $\Z_N$ with coefficients in $U(1)$ and/or changing the choice of a primitive root of unity.

It is well-known that the monoidal structures over the category of $G$-graded vector spaces are in one to one correspondence with the elements in $H^{3}(G, U(1))$ as follows. We briefly recall the construction, see \cite{EGNO} for details.

Let $\Vec_{G}$ be the skeletal category with associativity constraints equal to the identity morphisms. Suppose $\omega$ is a $3$-cocycle representing $[\omega]\in H^3(G, U(1))$. We denote by $\Vec_{G}^{\omega}$ the category with the same fusion rules as $\Vec_{G}$, with associativity constraints replaced by $a_{g_1, g_2, g_3}=\omega(g_1, g_2, g_3)\textbf{id}$. It is easy to check that the pentagon axiom is equivalent to the condition that $\omega$ is a cocycle
$$\omega(g_1g_2, g_3, g_4)\omega(g_1, g_2, g_3g_4)=\omega(g_1, g_2, g_3)\omega(g_1, g_2g_3, g_4)\omega(g_2,g_3,g_4).$$
The category $\Vec_{G}^{\omega}$ is also called $G$-graded vector space twisted by $\omega$. The construction above applies to all $G$-graded categories in a straightforward way, one replace the associativity constraint $a_{X, Y, Z}$ by $\omega(g_X, g_Y, g_Z)a_{X, Y, Z}$ if $X, Y, Z$ belong to grade $g_X, g_Y, g_Z$ respectively. The theorem below by Kazhdan and Wenzl says that all monoidal categories with fusion rules the same as  $SU(N)_k$ are obtained from $\Fus(SU(N)_{k})$ by such cocycle twists and/or by changing the quantum parameter $q$.

\begin{theorem}[\cite{kazdanwenzl}]
Let $\mathcal{C}$ be a rigid monoidal semisimple $\mathbb{C}$-category such that the fusion ring is isomorphic to the fusion ring of $SU(N)_k$. Then $\mathcal{C}$ is monoidally equivalent to $\C(\sl_N,k+N,q)^{\omega}$ for some primitive root of unity $q$ of order { $2(k+N)$}  uniquely determined up to $q\rightarrow q^{-1}$, and some $3$-cocycle $\omega\in Z^3(\Z_N,U(1))$.
\end{theorem}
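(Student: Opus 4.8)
The plan is to recover the tower of endomorphism algebras $\mathrm{End}_\C(X^{\otimes n})$ of the generating object $X=X_{[1]}$, together with its monoidal structure, and to identify this tower with the Hecke-algebra tower underlying $\C(\sl_N,k+N,q)$ up to a cocycle twist supported on the universal $\Z_N$-grading. Since $X$ tensor-generates $\C$ --- every simple $X_\lambda$ occurs as a summand of some $X^{\otimes n}$ --- the monoidal category $\C$ is determined by this tower; moreover the abstract multimatrix structure of each $\mathrm{End}_\C(X^{\otimes n})$, and of the inclusions $\mathrm{End}_\C(X^{\otimes n})\hookrightarrow\mathrm{End}_\C(X^{\otimes n+1})$, is already forced by the fusion ring, being the path algebra on the Bratteli diagram of Young diagrams with at most $N-1$ rows and $k$ columns.

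First I would construct a Hecke action on this tower. The space $\mathrm{End}_\C(X^{\otimes 2})\cong\bbC p_+\oplus\bbC p_-$ is two-dimensional, with $p_\pm$ the two minimal idempotents of $\mathrm{End}_\C(X^{\otimes 2})$ (onto the summands $X_{[2]}$ and $X_{[1,1]}$ of $X\otimes X$). Fix $g_1:=\lambda p_++\mu p_-$ with $\lambda\neq\mu$, so that $(g_1-\lambda)(g_1-\mu)=0$, and transport $g_1$ to elements $g_i\in\mathrm{End}_\C(X^{\otimes n})$ acting in the $i$-th and $(i+1)$-st tensor factors by conjugating with the associativity constraints. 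The interchange law gives $g_ig_j=g_jg_i$ for $|i-j|\geq 2$ at once; the real content of this step is the braid relation $g_1g_2g_1=g_2g_1g_2$ in $\mathrm{End}_\C(X^{\otimes 3})$.

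This braid relation is the main obstacle, and it is where the scalar $q$ is pinned down. The algebra $B:=\mathrm{End}_\C(X^{\otimes 3})$ is a multimatrix algebra --- a sum of at most three matrix blocks, of total dimension at most $6$ --- whose structure is read off from the fusion rules, and it agrees with that of the generic Hecke algebra $\mathcal{H}_3(q)$. The commutative subalgebras $A_i=\bbC\langle g_i\rangle$ meet the blocks of $B$ in a position dictated by the Bratteli diagram, and together they generate $B$ because $X$ generates $\C$. Analyzing this relative position inside $B$, one finds it rigid up to a single scalar parameter; imposing the braid relation then forces the eigenvalue ratio $\mu/\lambda$ to equal $-q^{\pm 2}$ for a scalar $q$, and the requirement that the resulting tower of algebras yield exactly the simple objects of $SU(N)_k$ --- Young diagrams with at most $N-1$ rows and $k$ columns --- forces $q$ to be a primitive root of unity of order $2(k+N)$. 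The two ways of labelling the minimal idempotents of $\mathrm{End}_\C(X^{\otimes 2})$ account for the ambiguity $q\leftrightarrow q^{-1}$. This step is the technical heart of Kazhdan--Wenzl, carried out by explicit computation in $B$ using that a $2\times2$ block reduces the braid relation to a scalar identity; the remaining small values of $N$ or $k$ are handled by the same method in lower-dimensional blocks.

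Finally, the relations now in hand give, for each $n$, a homomorphism $\mathcal{H}_n(q)\to\mathrm{End}_\C(X^{\otimes n})$ compatible with the inclusions; it is surjective because the $g_i$ generate, and comparing Bratteli diagrams it factors through precisely the semisimple quotient of $\mathcal{H}_n(q)$ from which Wenzl builds $\C(\sl_N,k+N,q)$. Assembling these isomorphisms produces a monoidal functor from $\C(\sl_N,k+N,q)$, possibly with its associativity constraint modified, to $\C$; it is an equivalence of categories because it is a bijection on simple objects and is fully faithful on the tensor powers of $X$, hence on all objects. The only residual freedom is in the associativity constraint, and --- the non-pointed data having been rigidified by the Hecke relations --- this freedom is exactly the restriction of the associator to the pointed subcategory $\Vec_{\Z_N}$, i.e.\ a class $[\omega]\in H^3(\Z_N,U(1))$. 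Absorbing it yields $\C\simeq\C(\sl_N,k+N,q)^\omega$, with $q$ unique up to inversion, as tracking the eigenvalue ratio $\mu/\lambda$ shows.
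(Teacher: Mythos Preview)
The paper does not supply its own proof of this statement: it is quoted as a theorem of Kazhdan--Wenzl \cite{kazdanwenzl} and used as input for the rest of the paper. There is therefore nothing in the paper to compare your sketch against beyond the one-line summary in the introduction, which says the Kazhdan--Wenzl approach ``is fairly technical and uses the relationship between the Hecke algebras $\mathcal{H}_n(q)$ and the centralizer algebras in $SU(N)_k$ in an essential way.''

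Your outline is a faithful pr\'ecis of that argument: reconstruct the tower $\mathrm{End}_\C(X^{\otimes n})$ from the fusion rules, extract a Hecke-algebra action by analysing how the two copies of $\mathrm{End}_\C(X^{\otimes 2})$ sit inside $\mathrm{End}_\C(X^{\otimes 3})$, pin down $q$ (up to $q\leftrightarrow q^{-1}$) from the braid relation together with the truncation of the Bratteli diagram, and then measure the residual associator ambiguity by a class in $H^3(\Z_N,U(1))$. Two small imprecisions are worth flagging. First, the cocycle freedom is a twist along the universal $\Z_N$-\emph{grading}, not ``the restriction of the associator to the pointed subcategory $\Vec_{\Z_N}$''; you said this correctly in your opening sentence and then conflated the two at the end. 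Second, the step ``assembling these isomorphisms produces a monoidal functor, possibly with its associativity constraint modified'' hides the actual work: matching the endomorphism towers along the inclusions $\mathrm{id}_X\otimes(-)$ does not by itself control the other embeddings $(-)\otimes\mathrm{id}_X$, and it is exactly the possible failure of coherence between the two that is recorded by the $3$-cocycle. As a high-level sketch of the cited result this is accurate, but it is not something the present paper undertakes to prove.
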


To characterize such twists, we give explicit representatives of elements in $H^{3}(\Z_N, U(1))$.
\begin{proposition}
Let $\mathbb{Z}_N=\Z/N\Z=\{0,1,\ldots,N-1\}$. The third cohomology group $H^{3}(\Z_N, U(1))\cong\Z_N$, with elements represented by cocycles
$$
\omega_{\eta}(i,j,k)=\begin{cases} 1 & i+j<N\\
\eta^{k} & i+j\geq N \end{cases}$$
where $\eta$ is an $N$-th root of unity.
\end{proposition}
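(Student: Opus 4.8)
The plan is to compute $H^3(\Z_N, U(1))$ abstractly, then verify that the displayed functions are cocycles and that they exhaust the cohomology. For the abstract computation I would use the standard $2$-periodic free resolution of $\Z$ over the group ring $\Z[\Z_N]=\Z[t]/(t^N-1)$,
\[
\cdots \xrightarrow{\,t-1\,} \Z[\Z_N] \xrightarrow{\,1+t+\cdots+t^{N-1}\,} \Z[\Z_N] \xrightarrow{\,t-1\,} \Z[\Z_N] \xrightarrow{\,\varepsilon\,} \Z \to 0.
\]
Applying $\Hom_{\Z[\Z_N]}(-,U(1))$ with the trivial action, the two differentials dualize, alternately, to the zero map and to the $N$-th power map $x\mapsto x^N$ on $U(1)$. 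Since $U(1)$ is divisible, one reads off $H^{2i+1}(\Z_N,U(1))\cong\{x\in U(1):x^N=1\}$ and $H^{2i}(\Z_N,U(1))=0$ for $i\ge 1$; in particular $H^3(\Z_N,U(1))$ is cyclic of order $N$. (Alternatively one can apply the long exact sequence to $0\to\Z\to\R\to U(1)\to 0$ and quote the well-known cohomology of $\Z_N$ with integer coefficients.)

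Next I would check that for every $N$-th root of unity $\eta$ the displayed function $\omega_\eta$ is a (normalized) $3$-cocycle. The economical way is to write $\omega_\eta(i,j,k)=\eta^{\,k\,\mu(i,j)}$, where $\mu:\Z_N\times\Z_N\to\{0,1\}$ is the ``carry'' function, $\mu(i,j)=1$ exactly when $i+j\ge N$ (representatives taken in $\{0,\dots,N-1\}$). Since $\mu$ is the $2$-cocycle of the extension $0\to\Z\xrightarrow{\,N\,}\Z\to\Z_N\to 0$ for the obvious set-theoretic section, it satisfies $\delta\mu=0$. A direct substitution then yields $(\delta\omega_\eta)(i,j,k,\ell)=\eta^{\,N\mu(i,j)\mu(k,\ell)}=1$ because $\eta^N=1$. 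Moreover $\omega_{\eta_1}\omega_{\eta_2}=\omega_{\eta_1\eta_2}$ holds identically, so $\eta\mapsto[\omega_\eta]$ is a group homomorphism from the $N$-th roots of unity to $H^3(\Z_N,U(1))$.

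It then suffices to show this homomorphism is injective, since source and target both have order $N$. For that I would produce a left inverse: for a $3$-cocycle $\omega$ put $I(\omega):=\prod_{i=0}^{N-1}\omega(1,i,1)$. A telescoping computation — using only that $i\mapsto i+1$ permutes $\Z_N$ to reindex two of the product's factors against each other — shows $I(\delta\psi)=1$ for every $2$-cochain $\psi$, so $I$ descends to a homomorphism $\bar I:H^3(\Z_N,U(1))\to U(1)$. On the other hand $\mu(1,i)=1$ only for $i=N-1$, so $I(\omega_\eta)=\eta$, whence $\bar I([\omega_\eta])=\eta$. This makes $\eta\mapsto[\omega_\eta]$ injective, hence bijective, which is exactly the claim. (Conceptually, $I$ is pullback along the comparison chain map from the bar resolution to the periodic resolution above, whose degree-$3$ component sends the free generator to $\sum_{i=0}^{N-1}(t,t^i,t)$; this is the ``reason'' $I$ is cohomologically meaningful, though the explicit telescoping check makes an appeal to a comparison map unnecessary.)

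I expect the only real work to be bookkeeping: dualizing the periodic resolution correctly (getting the alternation zero$/N$-th power right) and carrying out the two reindexing computations, namely the identity $(\delta\omega_\eta)(i,j,k,\ell)=\eta^{N\mu(i,j)\mu(k,\ell)}$ and the vanishing $I(\delta\psi)=1$. Once the carry cocycle $\mu$ is introduced, both the cocycle property of $\omega_\eta$ and the homomorphism property are essentially immediate, and the order count from the first paragraph upgrades injectivity to the full statement.
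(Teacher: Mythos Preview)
Your argument is correct and complete. The paper, however, does not prove this proposition at all: it is stated as a well-known fact about $H^3(\Z_N,U(1))$ and the standard cocycle representatives, with no argument given. So there is nothing to compare at the level of method.

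A few remarks on the details. Your formula $(\delta\omega_\eta)(i,j,k,\ell)=\eta^{\,N\mu(i,j)\mu(k,\ell)}$ is exactly right once one remembers that the third argument in $\omega_\eta(i,j,\,\overline{k+\ell}\,)$ must be reduced to $\{0,\dots,N-1\}$, producing the extra $N\mu(k,\ell)\mu(i,j)$ term on top of the $2$-cocycle identity for $\mu$; since $\eta^N=1$ the conclusion is the same either way, but it is good that you tracked it. The invariant $I(\omega)=\prod_{i=0}^{N-1}\omega(1,i,1)$ does vanish on coboundaries by the reindexing $i\mapsto \overline{i+1}$ exactly as you say, and your conceptual explanation via the comparison map from the bar resolution to the periodic resolution is the right way to see why such an $I$ should exist. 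The order count from the periodic resolution then upgrades injectivity of $\eta\mapsto[\omega_\eta]$ to bijectivity, finishing the proof. Your write-up supplies a self-contained justification for a result the paper simply quotes.
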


Among the twisted fusion categories, the category admit a braiding only if $\eta=\pm 1$.(See the erratum of \cite{tuba2005braided}) In particular, $\eta$ takes the value $-1$ only if $N$ is an even number. In this section, we will classify all braidings over the underlying fusion category obtain by twisting the $\C(\sl_N,k+N,q)$ by $-1$.

Before we state our classification theorem, we first fix some notations. Let us denote the twisted category by $\C(\sl_N,k+N,q)^{-}$, and equip it with a braiding as follows. Without loss of generality, we assume $\C(\sl_N,k+N,q)$ is skeletal with associative constraint $a_{-, -, -}$ and braiding $c_{-, -}$. Then one can get a skeletal category that equivalent to $\C(\sl_N,k+N,q)^{-}$ by replacing $a_{W, Y, Z}$ by $a'_{W, Y, Z}=\omega_{-1}(g_W, g_Y, g_Z)a_{W, Y, Z}$. We further replace the braiding morphism by $c'_{Y, Z}=s^{g_Y+g_Z}c(Y, Z)$. It is easy to verify $a'$ and $c'$ satisfies the hexagon equations if $s^{N}=-1$ (See Appendix). Suppose $b_{X, g}=t$ in $\C(\sl_N,k+N,q)$ and $N=2^{p}r$ where $r$ is odd. We choose $s=\sqrt{t^{r}}$ for the braiding of $\C(\sl_N,k+N,q)^{-}$.


\subsection{Braidings over $Fus(\C(\sl_N,k+N,q)^{-})$}
\begin{theorem}
For $k\ge 2$,
\begin{enumerate}
\item If $k$ is odd, there are exactly $2N$ different braid structures over $Fus(\C(\sl_N,k+N,q)^{-})$, all of which are degenerate.
\item If $k$ is even, there are exactly $2N$ different braid structures over $Fus(\C(\sl_N,k+N,q)^{-})$, In particular, the category admit a non-degenerate braiding.
\end{enumerate}
\begin{proof} Let $\C=\C(\sl_N,k+N,q)$ as in $\ref{untwist}$ and $\C^{-}$ denote the braided fusion category $\C(\sl_N,k+N,q)^{-}$ constructed as above.
\begin{enumerate}
    \item [Case 1:] $k$ is odd. \\
    Since $\C$ and $\C^{-}$ are both braided, we write the decomposition of $\C$ and $\C^{-}$ as in Proposition \ref{decomp}
    \[\C=\mathcal{MC}\boxtimes \C(\Z_{m},P)\]
    \[\C^{-}=\mathcal{MC}^{-}\boxtimes \C(\Z_{m},P^{-}).\]
    Notice that the grading of objects in the $\mathcal{MC}^{-}$ are all multiples of $2^{p}$, thus $\mathcal{MC}^{-}$ is braided equivalent to $\mathcal{MC}$ and hence nondegenerate. On the other hand, the total number of braidings over $\C$ is $2N=2mn$, while the number of braidings over $\C(\Z_{m},P)$ is $m$ (see Appendix). Thus there are $2n$ different braidings over $\text{Fus}(\mathcal{MC})$.
    
    The second factor is equivalent as a fusion category to the category of $\Z_m$-graded vector spaces with trivial associativity constraint. The braidings on it are uniquely determined by $c_{1, 1}=\xi$, where $\xi$ is a $2^r$-th root of unity. Thus we have $2^r$ different braidings over the second factor. Notice that among all braidings, $b_{i, 2^{r-1}}=(\xi^{i2^{r-1}})^{2}=\xi^{i2^{r}}=1$, so the object labeled by $[2^{r-1}]$ is in the symmetric center.
    
    According to Corollary \ref{unique}, a braiding over $\Fus{(\C^{-})}$ is uniquely determined by braidings over the two factors. Therefore, the number of different braidings over $\Fus(\C^{-})$ is $2nm=2N$.
    
    \item [Case 2:] $k$ is even. \\
    We only show that the braiding over $\C^{-}$ is nondegenerate. Then the conclusion can be reached using the same argument in the proof of Theorem \ref{untwist}.
    
    We prove the non-degeneracy by checking the symmetric center. Let $Y$ be a simple object in the symmetric center of $\C^{-}$. Notice that the fusion ring of $\C^{-}$ and $\C$ are isomorphic. The corresponding object in $\C$ centralizes $\C$ projectively. In the following, we first show that $Y$ is pointed, then we examine all the pointed objects in $\C$ to show that $Y$ can only be the unit object.
    
    Since $Y$ centralizes $\C$ projectively, then we can define $b_{Y, Z}$ to be the quantity such that
    $$c_{Y, Z}\circ c_{Z, Y}=b_{Y, Z}\textbf{id}_{Z\otimes Y}$$
    for $Z$ simple. We also abuse this notation to non-simple $Z$ if $Y$ and $Z$ centralize each other. Notice \[b_{X, Y^{\otimes N}}=b_{X, Y}^{N}=b_{X^{\otimes N}, Y}=b_{\textbf{1}, Y}=1.\]
    Thus $Y^{\otimes N}$ centralizes $X$. Because $\C$ is nondegenerate, the only simple object in $\mathcal{C}$ that centralizes $X$ is the unit object. Therefore $Y^{\otimes N}$ is a direct sum of unit objects. Let $Z$ be a simple summand of $Y^{\otimes N-1}$. Then $Y\otimes Z$ is again a direct sum of unit objects. Moreover, $\C$ is rigid, so that $\Hom(Y\otimes Z, \textbf{1})=\Hom(Z, Y^{*})$. The latter space is the hom space of two simple objects, thus of dimension at most $1$. Therefore $Z$ must be $Y^{*}$ and $Y$ is pointed.
    
    Next we compute $b'_{X, g}$ in $\C^{-}$. Since $g$ is in grade $k$, $c'_{X, g}=s^{k}c_{X, g}$ and $c'_{g, X}=s^{k}c_{g, X}$. Then
    $$b'_{X, g}=s^{2k}b_{X, g}=t^{1+2kr}.$$
    Notice that $2kr$ divides all prime factors of $N$, thus $1+2kr$ is relatively prime to $N$, and $b'_{X, g}$ is still a primitive $N$-th root of unity. Therefore, none of the non-trivial  pointed objects are in the symmetric center, hence the braiding over $\C^{-}$ is nondegenerate.
\end{enumerate}
\end{proof}

\end{theorem}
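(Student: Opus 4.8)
The plan is to count sections of the canonical projection $\C^- \boxtimes (\C^-)^{rev} \to \C^-$ given by tensor product, exactly as in the proof of Theorem \ref{untwist}, and then determine the symmetric center of each resulting braiding. For the counting step, the argument is identical to the untwisted case: since $\C^-$ is tensor generated by $X$, any section $s$ is determined by $s(X)$, and because $X$ is simple one of the two tensor factors of $s(X)$ must be invertible, so $s(X)$ is either $X\otimes g^i \boxtimes g^{-i}$ or $g^{-i}\boxtimes X\otimes g^i$ for some $0\le i\le N-1$. This gives $2N$ braidings in both cases; the content of the theorem is therefore entirely in the degeneracy statement. (One should note, as the theorem implicitly does, that $\Fus(\C(\sl_N,k+N,q)^-)$ is well-defined, i.e. that such a $\tau$-twist admitting a braiding exists — this is the $\eta=-1$ case recalled above, valid for $N$ even.)

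Next I would compute, for each of the $2N$ braidings, whether the symmetric center is trivial. As in Theorem \ref{untwist} the symmetric center stays pointed (Corollary 4.5 of \cite{NIKSHYCH18}), so it suffices to track the scalars $\tilde b_{s(X),s(g)}$. The key input is the already-computed value $b'_{X,g}=t^{1+2kr}$ in $\C^-$ (where $N=2^p r$, $r$ odd), together with the fact that pointed objects projectively centralize everything, so that on any braiding $\tilde c$ obtained from a section $s$ with $s(X)=X\otimes g^i\boxtimes g^{-i}$, one has $\tilde b_{s(X),s(g)} = (b'_{X,g})^{1+2ik}$-type expression — precisely the same algebra as in \eqref{equat} but with $t$ replaced by $t':=t^{1+2kr}$, still a primitive $N$th root of unity when $k$ is even (since $1+2kr$ is then coprime to $N$). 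This yields that $g^j$ lies in the symmetric center iff $(1+2ik)(1+2kr)\,j\equiv 0 \pmod N$, and since $1+2ik$ and $1+2kr$ are both coprime to $N$ when $k$ is even, the symmetric center is trivial for every choice of $i$; this is the $k$ even, non-degenerate conclusion. When $k$ is odd one instead uses the decomposition $\C^- = \mathcal{M}\C^- \boxtimes \C(\Z_m,P^-)$ from Proposition \ref{decom}: the grades in $\mathcal{M}\C^-$ are all multiples of $2^p$, so $\mathcal{M}\C^-$ is braided equivalent to the non-degenerate $\mathcal{M}\C$, while the pointed factor $\C(\Z_m,P^-)$ has underlying group $\Z_{2^r}$ (even order) and every pointed braided category on $\Z_{2^r}$ has the element $2^{r-1}$ in its symmetric center — hence every braiding on $\Fus(\C^-)$ is degenerate. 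Counting via Corollary \ref{unique} ($2n$ braidings on $\Fus(\mathcal{M}\C)$ times $m=2^r$ on the pointed factor, i.e. $2n\cdot m = 2N$ — wait, $m$ should be odd; rather $2^r$ braidings on the pointed factor and $2n/?$) — more carefully: the number of braidings on $\C(\Z_m,P^-)$ is $m$ by the Appendix, and the total must again be $2N$, consistent with $2n$ braidings on $\Fus(\mathcal{M}\C)$.

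The main obstacle I anticipate is bookkeeping the parity conditions correctly: one must be careful that in the $k$ odd case the pointed factor $\C(\Z_m,P^-)$ really does carry the even-order group $\Z_{2^r}$ rather than $\Z_m$ with $m$ odd — i.e. that twisting by the $\eta=-1$ cocycle on $\Z_N$ forces the maximal odd coprime-to-$k$ divisor used in the decomposition to change, so that the pointed factor picks up the $2$-part — and conversely that in the $k$ even case the scalar $1+2kr$ genuinely remains a unit mod $N$ (which uses that $2\mid k$ kills the obstruction from the prime $2$). Once these parity facts are pinned down, the degeneracy statements follow from the same symmetric-center computation as in Theorem \ref{untwist}, and the count of $2N$ braidings is formal in both cases.
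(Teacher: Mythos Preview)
Your overall strategy matches the paper's: factorization via Proposition~\ref{decom} for $k$ odd, and nondegeneracy plus section-counting for $k$ even. There are, however, two genuine issues in your Case~2 argument.

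\textbf{A circularity in establishing nondegeneracy.} To count braidings as sections of $\C^{-}\boxtimes(\C^{-})^{rev}\to\C^{-}$ you need the \emph{given} braiding on $\C^{-}$ to be nondegenerate in the first place; otherwise the Drinfeld center is not $\C^{-}\boxtimes(\C^{-})^{rev}$. You invoke Corollary~4.5 of \cite{NIKSHYCH18} to assert that the symmetric center is pointed, but that corollary concerns braidings obtained as sections starting from a nondegenerate category, so it cannot be used to bootstrap the nondegeneracy of $\C^{-}$ itself. The paper closes this gap by a direct argument: if $Y$ is simple and lies in the symmetric center of $\C^{-}$, then back in $\C$ the object $Y$ projectively centralizes everything; from $b_{X,Y^{\otimes N}}=b_{X,Y}^{N}=b_{X^{\otimes N},Y}=1$ one deduces $Y^{\otimes N}$ actually centralizes $X$ in the nondegenerate $\C$, hence is a sum of units, and a short rigidity argument then forces $Y$ to be invertible. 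Only after this is the computation $b'_{X,g}=t^{1+2kr}$ (a primitive $N$th root when $k$ is even) enough to conclude. You should supply this step.

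\textbf{An overclaim.} Your assertion that $1+2ik$ is coprime to $N$ for \emph{every} $i$ when $k$ is even is false: if $N$ has an odd prime factor $p'$ with $p'\nmid k$, then $2k$ is a unit mod $p'$ and one can solve $2ik\equiv -1\pmod{p'}$. So some of the $2N$ braidings can be degenerate even when $k$ is even. The theorem only asserts existence of a nondegenerate braiding, which follows already from the $i=0$ section; you should not claim more.

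Finally, note that your opening paragraph proposes counting via sections uniformly in both cases, but in Case~1 the conclusion is precisely that \emph{no} braiding on $\Fus(\C^{-})$ is nondegenerate, so the section machinery is unavailable there. You do eventually switch to the factorization count (as the paper does), which is the correct route; just drop the section-counting framing for $k$ odd.
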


\subsection{Braidings on Subcategories}
In this subsection, we classify all braided tensor categories $\D$ with the same fusion rule as the first factor $\mathcal{MC}$ in the decomposition formula \eqref{decomp}. We also show the braiding is unique up to certain symmetry.

\begin{theorem}\label{d}
Let $N=mn$ where $n=\text{gcd}(N, k^{\infty})$. Suppose $\mathcal{D}$ is a braided fusion category with the same fusion rules as
$$\mathcal{M}SU(N)_{k}=\bigoplus_{i=0}^{n-1}(SU(N) _k)^{im}.$$
Then 
\enumerate
\item The braiding over $\D$ must be nondegenerate.
\item The underlying monoidal category $\text{Fus}(\D)$ admits $2n$ different braidings.
\end{theorem}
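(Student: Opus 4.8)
The plan is to reduce the classification of braidings on $\mathrm{Fus}(\mathcal{D})$ to the already-solved case of $SU(N)_k$ via the decomposition and counting argument used for Theorem \ref{untwist}, while handling non-degeneracy directly by examining the symmetric center. First I would observe that any braided fusion category $\mathcal{D}$ with the fusion rules of $\mathcal{M}SU(N)_k = \bigoplus_{i=0}^{n-1}(SU(N)_k)^{im}$ has all simple objects living in grades that are multiples of $m$; in particular the pointed subgroup of $\mathcal{D}$ corresponds to $g^n$-powers, so $\mathcal{P}(\mathcal{D})$ has rank $n$ (not $m$). This is the key structural feature: $\mathcal{M}SU(N)_k$ is, in the notation of Proposition \ref{decom}, already ``$\mathcal{M}$-like'' and carries no further pointed tensor factor that splits off (since $\gcd(m,n)=1$ makes $\mathcal{M}\mathcal{C}$ indecomposable in the relevant sense).

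For part (1), non-degeneracy: I would argue exactly as in Case 2 of the previous theorem. Let $Y$ be a simple object in the symmetric center of $\mathcal{D}$. Since the fusion ring of $\mathcal{D}$ is that of $\mathcal{M}SU(N)_k$, which is a fusion subring of that of $SU(N)_k$, and since $X$ does not lie in $\mathcal{D}$ but $X^{\otimes m}$ (or rather some object of grade $m$ generating the grading) does, I would first show $Y$ must be pointed by the same rigidity/Hom-dimension argument ($Y^{\otimes(n)}$ centralizes everything, forces $Y^{\otimes n}$ to be a sum of units, hence $Y$ invertible). Then, since $\mathcal{P}(\mathcal{D})$ is generated by $g^n$ and $b_{Y,g^n}$ is governed by the value $t = b_{X^{[m]}, g^n}$ where $X^{[m]}$ generates grade $m$, I would show this is a primitive $n$-th root of unity: indeed in $SU(N)_k$ one has $b_{X,g} = t_0$ a primitive $N$-th root of unity, and $g^n$ has grade $nk \bmod N$, so pairing it against a grade-$im$ object gives $t_0^{(im)(nk)} = t_0^{Nik} $-adjusted powers, which one checks is a primitive $n$-th root of unity as a function of $i$. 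Hence no nontrivial invertible object lies in the symmetric center, and $\mathcal{D}$ is non-degenerate. The main subtlety here is that $\mathcal{D}$ is only assumed to have the right fusion rules, not to embed in any known braided category, so the computation of $b_{Y,-}$ must be done abstractly from the generating relation, as in the proof of Proposition \ref{decom}.

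For part (2), counting: having established that every braiding on $\mathrm{Fus}(\mathcal{D})$ is non-degenerate, I would use that braidings on a non-degenerate braided fusion category $\mathcal{D}_0$ correspond to sections of $\mathcal{D}_0 \boxtimes \mathcal{D}_0^{rev} \to \mathcal{D}_0$, determined by the image of a generating object of the grading. But a cleaner route, given Corollary \ref{unique} and Theorem \ref{untwist}, is: any braiding on $\mathrm{Fus}(\mathcal{D})$ extends (via Deligne product with a chosen braiding on a pointed $\mathcal{C}(\Z_m,P)$) to a braiding on a category with the fusion rules of $SU(N)_k$, and conversely Proposition \ref{decom} shows every braiding on the latter restricts to one on the $\mathcal{M}$-factor. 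Since there are $2N = 2mn$ braidings on $\mathrm{Fus}(SU(N)_k)$ (Theorem \ref{untwist}) and exactly $m$ braidings on $\mathrm{Fus}(\mathcal{C}(\Z_m,P))$ (computed in the Appendix, as the pointed category $\Vec_{\Z_m}^{\omega}$ with $m$ odd admits $m$ braidings up to the relevant count), and since by Corollary \ref{unique} braidings on the product biject with pairs, we get $2N/m = 2n$ braidings on $\mathrm{Fus}(\mathcal{D})$. I expect the main obstacle to be justifying that \emph{every} braiding on $\mathrm{Fus}(\mathcal{D})$ arises this way — i.e., that the decomposition $\mathcal{C} = \mathcal{M}\mathcal{C} \boxtimes \mathcal{C}(\Z_m,P)$ of Proposition \ref{decom} is compatible with varying the braiding on the $\mathcal{M}$-factor independently — which requires checking that for any braiding on $\mathrm{Fus}(\mathcal{D})$ and any braiding on $\mathcal{C}(\Z_m,P)$, the product braiding on $\mathrm{Fus}(\mathcal{D}) \boxtimes \mathcal{C}(\Z_m,P)$ has the $\Z_m$-factor centralizing the $\mathcal{D}$-factor; this follows from the same $b_{Y,g^n} = t^{(im)(n)} = t^{Ni} = 1$ computation as in Proposition \ref{decom}, now run with the abstract $t$, so the bijection is clean once non-degeneracy is in hand.
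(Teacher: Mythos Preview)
Your counting argument for part (2) is essentially the paper's: take a Deligne product with a pointed $\Z_m$-category to land in a category with $SU(N)_k$ fusion rules, count $2N$ braidings there and $m$ on the pointed factor, and divide. Two points you elide: you must invoke Kazhdan--Wenzl to know that $\mathrm{Fus}(\mathcal{D})\boxtimes\mathcal{P}(\Z_m)$ is monoidally one of the categories $\Fus(\C(\sl_N,N+k,q))^{\pm}$ (otherwise Theorem~\ref{untwist} does not apply), and the twisted case $(-)^-$ must be allowed, not just the untwisted one. Also, your ``main obstacle'' (that the two factors centralize each other) is not an obstacle at all: in a Deligne product the factors always centralize each other; this does not need the computation from Proposition~\ref{decom}. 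Your parenthetical ``$m$ odd'' is wrong in general (e.g.\ $k$ odd, $N$ even), though the Appendix count of $m$ braidings on $\Vec_{\Z_m}^\omega$ holds regardless of parity.

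Part (1), however, has a genuine gap. Your argument that a transparent simple $Y$ must be pointed is circular: you say ``$Y^{\otimes n}$ centralizes everything, forces $Y^{\otimes n}$ to be a sum of units,'' but the second clause \emph{is} nondegeneracy, which is what you are proving. In the previous theorem the analogous step worked because one had a \emph{reference} nondegenerate category $\C$ with the same underlying fusion category, so that ``centralizes $X$'' forced ``sum of units.'' Here $\mathcal{D}$ is an arbitrary braided category with those fusion rules; you have no such reference yet, and $X$ does not even live in $\mathcal{D}$, so your subsequent computation with $t_0=b_{X,g}$ ``in $SU(N)_k$'' is not about $\mathcal{D}$ at all.

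The paper's fix is to run your part (2) embedding \emph{first} and use it for part (1). One forms $\C=\mathcal{D}\boxtimes\mathcal{P}(\Z_m)$, applies Kazhdan--Wenzl, and observes that both $\Fus(\C(\sl_N,N+k,q))$ and its twist admit a nondegenerate braiding on the non-pointed factor of the decomposition~\eqref{decomp}; hence $\mathrm{Fus}(\mathcal{D})$ itself admits \emph{some} nondegenerate braiding. Nikshych's result then says the symmetric center of \emph{any} braiding on $\mathrm{Fus}(\mathcal{D})$ is pointed, so one only needs to test $g^{jm}$. Every braiding on $\mathrm{Fus}(\mathcal{D})$ arises by restricting a braiding on $\mathrm{Fus}(\C)$, and those are parametrized by sections $s(X)=Xg^i\boxtimes g^{-i}$ as in Theorem~\ref{untwist}; the formula $\bar b_{X,g^{jm}}=t^{(2ki+1)jm}$ from \eqref{equat} then reduces transparency of $g^{jm}$ to $(2ki+1)j\equiv 0\pmod n$. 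The key arithmetic point---which your sketch misses---is that $2ki+1$ is a unit in $\Z_n$ because $n\mid k^M$ for large $M$, so $(2ki+1)\sum_{\ell=0}^{M-1}(-2ki)^\ell\equiv 1\pmod n$; hence $j=0$ and every braiding is nondegenerate.
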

\begin{proof}
Let $\P(\Z_{m})$ be a pointed fusion category with the same fusion rules as $\Vec_{\Z_{m}}$. Then, the fusion ring of $\D\boxtimes \P(\Z_{m})$ is isomorphic to the fusion ring of $SU(N)_k$. Thus $\D\boxtimes \P(\Z_{m})$ is monoidal equivalent to the monoidal category $\C(\sl_N, N+k, q))$ or the twisted monoidal category $\C(\sl_N,N+k, q)^{-}$. Both underlying fusion categories admit a nondegenerate braiding on the nonpointed factor in the decomposition formula \eqref{decomp}. On the other hand, we may choose a monoidal structure on the pointed factor $\P(\Z_{m})$, so that $\C:=\D\boxtimes \P(\Z_{m})$ admits a nondegenerate braiding. As a result, all braidings over the underlying fusion category of $\text{Fus}(\D)$ comes from the restriction of a braiding over $\text{Fus}(\C)$. Now we show for all the braidings over $\D$, the symmetric center must be trivial.

Since $\D$ admits a nondegenerate braiding, we only need to examine the pointed objects for categories equipped with other braidings. Due to the decomposition formula in \eqref{decomp}, all such objects can be written in the form $g^{jm}$, where $0\le j \le n-1$. Notice that $g^{jm}$ centralizes $\D$ if and only if $g^{jm}$ centralizes $X\in \mathcal{O}(\C)$. Let us choose a nondegenerate braiding $c_{-,-}$ over the underlying fusion category $\text{Fus}(\C)$. Then we have 
\begin{enumerate}
    \item the invariant $b_{X, g}=t$ is a primitive $N$-th root of unity;
    \item the braiding over $\C$ is in one to one correspondence with sections of $\C\boxtimes \C^{rev}\rightarrow \C$.
\end{enumerate}
Pick one braiding over $\C$ denoted by $\bar{c}$ that corresponding to the section $s$. We denote by $\bar{b}_{X, Y}$ the quantity such that
$$\bar{c}_{Y, X}\circ \bar{c}_{X, Y}=\bar{b}_{X, Y}\textbf{id}_{X\otimes Y}$$
if $X$ and $Y$ centralize each other.

Recall in \eqref{equat}, we have computed $\bar{b}_{X, g^{jm}}=t^{(2ki+1)jm}$. Thus, $g^{jm}$ is in the symmetric center if and only if 
$$(2ki+1)jm\equiv 0 \mod N,$$
which is equivalent to 
$$(2ki+1)j\equiv 0 \mod n.$$
Notice that $n$ is a divisor of $k^M$ for large $M$. This implies that $2ki+1$ is invertible in ring $\Z_{n}$ since
$$(2ki+1)(1-2ki+(2ki)^2-\cdots+(-1)^{M-1}(2ki)^{M-1})=1-(2ki)^M\equiv 1 \mod {n}.$$
Thus $g^{jm}$ is in the symmetric center if and only if $j=0$. This proves the first statement of Theorem \ref{d}.

The second statement can be proved by counting the number of braidings on $\C=\D\boxtimes \Z_{m}$ and the pointed factor $\mathcal{P}(\Z_{m})$. There are $2N$ different braidings over $\C$, and there are $m$ different braidings over $\Z_{m}$, see Appendix. Thus there are exactly $2N/m=2n$ distinct braidings over $\text{Fus}(\mathcal{D})$.
\end{proof}

\subsection{Autoequivalences and Braidings over $\Fus(\D)$.}
In this subsection, we consider how the group of autoequivalences of $\Fus(\D)$ acting on the set of braidings. Notice that all the autoequivalences of $\C$ for $\C$ Grothendieck equivalent to $SU(N)_k$ preserve the factorization $\C=\mathcal{MC}\boxtimes \C(\Z_{m}, P)$. Consequently, $\Eq(\C)=\Eq(\mathcal{MC})\times \Eq(\C(\Z_{m}, P))$. This allow us to study the group of autoequivalences of $\D$ as a subgroup of the group of autoequivalences of $\C$ as we constructed in the last subsection.

On the other hand, for the autoequivalence group of $\C$, we have the following exact sequence.
\[0\rightarrow\text{Gauge}(\C)\rightarrow \text{Eq}(\C)\xrightarrow{\text{Forget}} \text{Aut}(\mathcal{K}_{0}(\C))\]
where $\text{Gauge}(\C)$ is the subgroup of autoequivalences of $\C$ that fix the simple objects, and $\text{Aut}(\mathcal{K}_{0}(\C))$ is the group of automorphisms of the fusion ring of $\C$. By the following lemma, we show in our case, the Gauge equivalences are all braided.

\begin{lemma}\label{gauging}
Let $\mathcal{C}$ be a braided fusion category and $(F, J)\in \text{Gauge}(\C)$. Suppose $\C$ is tensor generated by a single object $X$ and $X\otimes X$ decomposes into distinct simple objects. Then $(F, J)$ is braided.
\end{lemma}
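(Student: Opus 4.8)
The plan is to show that the natural isomorphism measuring the failure of $(F,J)$ to be braided is trivial, by reducing everything to what happens on the generating object $X$. Recall that a tensor functor $(F,J)$ is braided if for all objects $A,B$ the diagram relating $J_{A,B}$, $J_{B,A}$, $F(c_{A,B})$ and $c_{F(A),F(B)}$ commutes. Since $(F,J)\in\text{Gauge}(\C)$, on each simple object $F$ is (isomorphic to) the identity, so we may and do assume $F$ fixes every simple object on the nose; then for each pair of simple objects $A,B$ the composite
$$\psi_{A,B} := c_{A,B}^{-1}\circ J_{B,A}^{-1}\circ F(c_{A,B})\circ J_{A,B}$$
is an automorphism of $A\otimes B$. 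First I would observe that, because the braiding and the constraint $J$ are natural, the assignment $(A,B)\mapsto\psi_{A,B}$ is compatible with direct sums and with passing to direct summands; consequently $\psi$ is determined by its values on pairs of \emph{simple} objects, and in fact $(F,J)$ is braided if and only if $\psi_{X,X}=\textbf{id}_{X\otimes X}$, using that $X$ tensor-generates $\C$.

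The key step is the hypothesis that $X\otimes X$ decomposes into \emph{pairwise distinct} simple objects. This forces $\text{End}(X\otimes X)$ to be a commutative (in fact diagonal) algebra, so $\psi_{X,X}$ is a scalar on each simple summand $Z\subset X\otimes X$, say $\psi_{X,X}|_Z=\lambda_Z$. I would then pin down these scalars using two constraints. Compatibility of $J$ with the associativity constraints (the hexagon-type/monoidal-functor axioms for $(F,J)$) together with naturality forces a cocycle-type relation among the $\psi$'s: roughly, $\psi_{A\otimes B,C}$ is built from $\psi_{A,C}$ and $\psi_{B,C}$, and similarly in the other slot. Feeding in $A=B=C=X$ and chasing which simple objects appear in $X^{\otimes 3}$, the scalars $\lambda_Z$ must all be equal to a single scalar $\mu$ that then satisfies a multiplicativity condition; combined with the fact that $\psi$ on the unit object $\mathbf 1\subset X\otimes X^*$ (or on $X\otimes X$ itself if $\mathbf 1$ occurs) is forced to be the identity, one gets $\mu=1$.

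The main obstacle I anticipate is making the reduction ``$\psi$ is determined on simples, and triviality on $X\otimes X$ suffices'' genuinely rigorous: one has to track the monoidal-functor coherence (the $J$-hexagon relating $J_{A\otimes B,C}$, $J_{A,B\otimes C}$, $J_{A,B}$, $J_{B,C}$ and the associators) carefully enough to see that $\psi$ really is ``bimultiplicative up to the associativity isomorphisms'', and then to exploit that $F$ fixes simples and that $X\otimes X$ is multiplicity-free to kill the ambiguity. A clean way to organize this is to note that $\psi$ defines an element of the group of monoidal self-isomorphisms of the identity functor of $\C$ twisted appropriately — equivalently, a character-like datum on the fusion ring valued in automorphisms — and then the multiplicity-freeness of $X\otimes X$ plus $X$ generating $\C$ collapses that datum to the trivial one. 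Once $\psi_{X,X}=\textbf{id}$, naturality and the generation of $\C$ by $X$ propagate triviality to all pairs, so $(F,J)$ is braided.
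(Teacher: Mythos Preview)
Your reduction---defining $\psi_{A,B}$ and noting it suffices to show $\psi_{X,X}=\textbf{id}$---matches the paper exactly. But from that point on you overshoot the target. Once you have observed that $\mathrm{End}(X\otimes X)$ is commutative, you are essentially done, and no cocycle-chasing through $X^{\otimes 3}$ is needed. The missing observation is that $F(c_{X,X})=c_{X,X}$: since $X\otimes X$ is multiplicity-free, $\mathrm{End}(X\otimes X)\cong\prod_Z\mathrm{End}(Z)\cong\bbC^r$, and a gauge autoequivalence (fixing each simple $Z$, hence fixing $\textbf{id}_Z$) must act as the identity on each one-dimensional factor, so it acts as the identity on the whole algebra. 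Then
\[
\psi_{X,X}=c_{X,X}^{-1}\,J_{X,X}^{-1}\,F(c_{X,X})\,J_{X,X}=c_{X,X}^{-1}\,J_{X,X}^{-1}\,c_{X,X}\,J_{X,X}=\textbf{id},
\]
the last equality by commutativity of $\mathrm{End}(X\otimes X)$. That is the paper's entire argument.

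Your proposed route---extracting scalars $\lambda_Z$, deriving a multiplicativity/cocycle relation from the monoidal-functor axiom on $J$, and then normalizing via the appearance of $\mathbf 1$---has a genuine soft spot: $\mathbf 1$ need not occur in $X\otimes X$ (it does only when $X$ is self-dual), and ``$\mathbf 1\subset X\otimes X^*$'' does not directly feed into $\psi_{X,X}$. One could probably salvage the argument by pushing the bimultiplicativity of $\psi$ far enough to reach a pair where $\mathbf 1$ appears, but this is work you do not need to do. Replace the second half of your proposal with the two-line computation above.
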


\begin{proof}
Since $F$ is an autoequivalence of $\C$, we denote the original braiding by $c$ and the braiding induced by $F$ by $c'$. Notice that the induced braiding $c'_{Y, Z}$ is induced by the following diagram.

$$\begin{tikzcd}
F(Y\otimes Z)\arrow[r, "F(c_{Y,Z})"] & F(Z\otimes Y)\arrow[d, "J_{Z, Y}^{-1}"] \\
F(Y)\otimes F(Z)\arrow[u, "J_{Y, Z}"]\arrow[r, "c'_{Y, Z}"] &  F(Z)\otimes F(Y)
\end{tikzcd}$$

In particular, $(F, J)$ is braided if the induced braiding $c'=c$. On the other hand, since our category is tensor generated by a single object $X$, in order to show $(F, J)$ is braided we only need to show $c_{X, X}=c'_{X, X}$.

Without loss of generality, we may assume $\C$ is skeletal. In this case, we have

$$\begin{tikzcd}
X\otimes X \arrow[r, "F(c_{X,X})"] & X\otimes X \arrow[d, "J_{X,X}^{-1}"] \\
X\otimes X\arrow[u, "J_{X, X}"]\arrow[r, "c'_{X, X}"] &  X\otimes X
\end{tikzcd}$$

Since $X\otimes X$ decomposes into distinct simple objects, $F(c_{X, X})=c_{X, X}$ and $\text{End}(X\otimes X)$ is commutative. Therefore, the composition 
\[c'_{X, X}:=J_{X, X}^{-1}c_{X, X}J_{X, X}=c_{X, X}.\]
This proves the lemma.
\end{proof}

With the lemma above, the group action of $\text{Eq}(\C)$ on the set of braidings descends to an action of the group of its image in $\text{Aut}(\mathcal{K}_{0}(\C))$. In particular, the simple current autoequivalences we constructed in Section 3 descend to a subgroup of $\text{Aut}(\mathcal{K}_{0}(\C))$. We denote this group by $\text{ScEq}(-)$, it acts on the set of braidings.

In the theorem below, we consider the subgroup $\text{ScEq}(\D)$, acting on the set of braidings over the underlying fusion category.
\begin{theorem}\label{smart}
Let $\D$ be a braided fusion category Grothendieck equivalent to $\mathcal{M}SU(N)_k$, and $N=mn$ where $n=\text{gcd}(N, k^{\infty})$. The underlying fusion category $\text{Fus}(\D)$ admits a unique braiding up to the following three actions:
\begin{enumerate}
    \item reverse the braiding;
    \item simple current autoequivalence;
    \item when $N$ and $k$ are both even, replace $c_{X, Y}$ by $-c_{X, Y}$ if both $X$ and $Y$ are of odd grade.
\end{enumerate}
The third action is needed only in the case that $N\equiv 2 \mod 4$ and $k\equiv 2\mod 4$.
\end{theorem}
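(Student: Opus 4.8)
The plan is to combine the enumeration of braidings from Theorem \ref{d} with the orbit structure of the group $\mathrm{ScEq}(\D)$ acting on them, using Corollary \ref{unique} and the decomposition $\C = \D\boxtimes\C(\Z_m,P)$ throughout. First I would fix a reference braiding on $\Fus(\D)$, say the one coming from the section $s(X)=X\boxtimes\1$ in $\D\boxtimes\D^{rev}$ (equivalently $i=0$ in the notation of \eqref{equat}); by Theorem \ref{d} there are exactly $2n$ braidings total, and from the proof of Theorem \ref{d} they are indexed by the data $(\varepsilon,i)$ where $\varepsilon\in\{\pm\}$ records which tensor factor of $\D\boxtimes\D^{rev}$ the section sends $X$ into, and $i\in\Z_n$ records the twist by $g^{im}$ (here $g^m$ generates the pointed part of $\D$ of order $n$). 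Note $\varepsilon=-$ is precisely the reversal action (1), so it suffices to show that action (2), together with action (3) in the exceptional case, acts transitively on the $n$ values of $i$ within each $\varepsilon$-block.

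Next I would compute the effect of a simple current autoequivalence on the index $i$. A simple current autoequivalence of $\Fus(\D)$ is (the restriction to $\mathcal{M}\C$ of) one sending $[X]\mapsto[X\otimes g^a]$ with $1+ka$ coprime to $N$; on the level of braidings one reads off from the functoriality computations in Section 3 and the formula $\bar b_{X,g^{jm}}=t^{(2ki+1)jm}$ in the proof of Theorem \ref{d} that applying such an autoequivalence multiplies the relevant braiding invariant $b_{X,g^m}=t^{?}$ by a factor $t^{(\text{something linear in }a)}$, i.e.\ it shifts $i$ by an amount depending on $a$. Because $t$ restricted to the order-$n$ pointed subgroup is a primitive $n$-th root of unity and $k$ is invertible modulo $n$ (as $n\mid k^M$ forces $\gcd$ considerations exactly as in the proof of Theorem \ref{d}), the shifts realized as $a$ ranges over the admissible residues generate either all of $\Z_n$ or an index-$2$ subgroup; the latter happens precisely when the constraint "$1+ka$ coprime to $N$" together with the parity table (Table \ref{EMtheorem}, the parameter $t$ there) removes one residue class mod $2$, which is exactly the case $N\equiv 2\bmod 4$, $k\equiv 2\bmod 4$. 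In that case action (3), which multiplies $c_{X,Y}$ by $-1$ on odd-graded objects, supplies precisely the missing shift by $1$ modulo $2$ (it changes $b_{X,g^m}$ by $(-1)^{\text{odd}}$), restoring transitivity.

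The main obstacle I anticipate is bookkeeping the exact exponent shift induced by the simple current autoequivalence and matching it cleanly against the group $\mathrm{ScEq}(\D)$ from Table \ref{EMtheorem}: one must be careful that the autoequivalence group of $\D$ (a factor of $\Eq(\C)=\Eq(\mathcal{MC})\times\Eq(\C(\Z_m,P))$) is the one relevant here, that gauge autoequivalences contribute nothing by Lemma \ref{gauging}, and that the admissible values of $a$ modulo $n$ (as opposed to modulo $N$) are correctly counted — the coprimality of $m$ and $n$ and the Chinese remainder theorem let one decouple the $\Z_m$ and $\Z_n$ parts, but the parity obstruction lives exactly at the prime $2$ when $2\,\|\,N$ and $2\,\|\,k$, which is why action (3) is needed only there. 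Once the orbit count is pinned down — $2n$ braidings, two $\varepsilon$-blocks of size $n$ swapped by (1), and (2)+(3) transitive on each block — uniqueness up to the three listed actions is immediate.
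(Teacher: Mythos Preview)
Your proposal is on the right track and shares the paper's overall architecture: split the $2n$ braidings into two $\varepsilon$-blocks via reversal, then analyze the $\mathrm{ScEq}(\D)$-action on each block of size $n$. The paper, however, short-circuits your explicit shift computation by invoking the orbit--stabilizer theorem directly: since $|\mathrm{ScEq}(\D)|=n$ and the stabilizer of a given braiding is precisely the subgroup of \emph{braided} simple current autoequivalences (which, by the same eigenvalue-ratio check on $c_{X,X}$ as in \cite{ediesimple}, is trivial unless $N\equiv 2\pmod 4$ and $k\equiv 2\pmod 4$, where it is $\Z_2$), one immediately gets a single orbit of size $n$ in the generic case and two orbits of size $n/2$ in the exceptional case, without ever identifying the additive shift on the index $i$. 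In the exceptional case the paper then argues that action (3) is \emph{not} realized by any simple current autoequivalence, by checking that the congruence $a+\tfrac{ka^2}{2}\equiv\tfrac{N}{2}\pmod N$ (which arises from $\tilde c_{X,X}=b_{X,g^a}c_{g^a,g^a}c_{X,X}=-c_{X,X}$) has no solution; hence (3) genuinely moves between the two orbits.

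Your route via explicit shifts would also work, but the phrase ``shifts $i$ by an amount depending on $a$'' hides a term quadratic in $a$ (coming from $c_{g^a,g^a}$), so the image of the shift map is not obviously a subgroup of $\Z_n$; this is exactly what makes the index-$2$ obstruction delicate to pin down by direct computation. The orbit--stabilizer argument sidesteps that issue entirely.
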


We remark that the first and third operations always induce different braidings. On the other hand, simple current autoequivalences could be braided: there are exactly $2$ braided simple current autoequivalence only if $N\equiv 2 \mod 4$ and $k\equiv 2\mod 4$.

\begin{proof}
The order of $\text{ScEq}(\D)$ is $n$, by an exactly the same computation as in \cite{ediesimple} on checking the braiding morphism on $X\otimes X$, the subgroup of braided autoequivalences is 
\begin{enumerate}
    \item[(i)] $\Z_{2}$ if $N\equiv 2 \mod 4$ and $k\equiv 2\mod 4$;
    \item[(ii)] trivial otherwise.
\end{enumerate}
Since simple current autoequivalences do not change the ratio of the eigenvalues of the braid morphism, $\text{ScEq}(\D)$ acts on the set of $n$ braidings corresponding to those sections in $\D\boxtimes \D^{rev}$ with only pointed objects on the reversed side.

The case (ii) follows directly by the orbit-stablizer theorem. For case (i), we need to show the action (3) cannot be realized by simple current autoequivalences. Let $\C$ be a nondegenerate braided fusion category such that $\mathcal{K}_{0}(\C)=\mathcal{K}_{0}(SU(N)_k)$ with $\mathcal{MC}=\D$. Notice that the action (3) on $\D$ induces the same action on $\C$. Suppose $F$ is a simple current autoequivalence of $\C$ that sends $X$ to $F(X)=X\otimes g^{a}$ which induces the same braiding as action $(3)$. Then the induced braiding
$$\tilde{c}_{X, X}=b_{X, g^{a}}c_{g^a, g^a}c_{X, X}=-c_{X, X}.$$
Observe that $b_{X, g}=t$ is a primitive $N$-th root of unity. The above equation is simplified to
$$t^{a+\frac{ka^2}{2}}=-1,$$
or equivalently
$$a+\frac{ka^2}{2}\equiv \frac{N}{2} \mod N.$$
The above equation does not have a solution if $N\equiv 2 \mod 4$ and $k\equiv 2\mod 4$. This completes the proof. 
\end{proof}

\noindent
{\bf Acknowledgements:}
E.C.R. and Z.F. were partially supported by NSF grant DMS-1664359. E.C.R. was also partially supported by NSF grant DMS-2000331.

\appendix 
\section{Braidings over $\Z_N$} In this appendix, we compute all possible twists of $\Z_N$ that admit a braid structure, it is a special case of \cite[Exercise 8.4.3]{EGNO}.
\begin{proposition}\label{twisting}
The category $\Vec_{\Z_N}^{\omega}$ has a braid structure if and only if
\begin{enumerate}
    \item $\omega$ is trivial if $N$ is odd;
    \item $\omega^2$ is trivial if $N$ is even.
\end{enumerate}
In particular, $\omega$ is trivial if and only if the following conditions hold: For all objects $l$ with order $2^p$, the quantity $b_{l, l}$ has order less than $2^p$.
\end{proposition}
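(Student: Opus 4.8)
The plan is to run the Eilenberg--MacLane classification of abelian $3$-cocycles, following \cite[Exercise 8.4.3]{EGNO}. A braiding on $\Vec_{\Z_N}^\omega$ is precisely an abelian $3$-cocycle $(\omega,c)$ with the prescribed underlying $3$-cocycle $\omega$, and the set of such pairs up to equivalence forms a group $H^3_{\mathrm{ab}}(\Z_N,U(1))$ identified with the group $Q(\Z_N)$ of quadratic forms $q\colon\Z_N\to U(1)$ via $q(x)=c_{x,x}$. Forgetting the braiding gives a homomorphism $\Psi\colon Q(\Z_N)\to H^3(\Z_N,U(1))$, and $\Vec_{\Z_N}^\omega$ admits a braiding if and only if $[\omega]\in\mathrm{Im}\,\Psi$. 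The whole statement thus reduces to computing $Q(\Z_N)$, the map $\Psi$, and the self-braidings $c_{l,l}$.

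First I would make $Q(\Z_N)$ explicit. It is classical that on a cyclic group every quadratic form is homogeneous: a root of unity $\mu$ gives $q_\mu(x)=\mu^{x^2}$, with associated bilinear form $(x,y)\mapsto\mu^{2xy}$, and $q_\mu$ is a well-defined quadratic form on $\Z_N$ exactly when $\mu^{\gcd(2N,N^2)}=1$. Since $\gcd(2N,N^2)=N$ for $N$ odd and $2N$ for $N$ even, this forces $\mu$ to be an $N$th root of unity when $N$ is odd and allows $\mu$ to be any $2N$th root of unity when $N$ is even; distinct admissible $\mu$ give distinct forms (evaluate at $x=1$), and a standard count (\cite{EGNO}) shows there are no others. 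Hence $Q(\Z_N)\cong\Z_N$ for $N$ odd and $Q(\Z_N)\cong\Z_{2N}$ for $N$ even.

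Next I would identify $\Psi(q_\mu)$. For the premetric group $(\Z_N,q_\mu)$ the associativity constraint is cohomologous to the standard ``carry'' cocycle: writing $\varepsilon(b,c)\in\{0,1\}$ for the carry of $b+c$ modulo $N$, one has $\omega(a,b,c)=(\mu^{N})^{a\,\varepsilon(b,c)}$, which is cohomologous to the cocycle $\omega_\eta$ introduced above with $\eta=\mu^{N}$. Since $\mu^{2N}=1$ we get $\eta=\mu^{N}\in\{\pm1\}$, so $\mathrm{Im}\,\Psi$ is contained in the $2$-torsion subgroup of $H^3(\Z_N,U(1))\cong\Z_N$, with $[\omega_\eta]=0$ exactly when $\mu^{N}=1$. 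Unwinding: for $N$ odd, every admissible $\mu$ satisfies $\mu^{N}=1$, so $\omega$ must be cohomologically trivial; for $N$ even, $[\omega]$ is either trivial ($\mu^{N}=1$) or the unique order-$2$ class ($\mu^{N}=-1$), so $\omega^2$ is trivial, and both cases occur (a primitive $2N$th root realizes $\mu^{N}=-1$). I expect the verification of this carry-cocycle formula --- equivalently, the direct evaluation of $\Psi$ on a generator of $Q(\Z_N)$ --- to be the only non-formal point; alternatively, for $N$ even one may simply cite the realization of the order-$2$ class as the pointed subcategory of $SU(N)_k$ recorded in Section~3.4 to see $\mathrm{Im}\,\Psi$ is nontrivial, while the absence of higher-order $\omega$ is already forced by $\mu^{2N}=1$.

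Finally, for the ``in particular'' clause, write $N=2^pr$ with $r$ odd and let $c$ be the braiding attached to $q_\mu$, so that $c_{l,l}\circ c_{l,l}=b_{l,l}\textbf{id}$ with $b_{l,l}=q_\mu(l)^2=\mu^{2l^2}$ for $l\in\Z_N$. The objects of maximal $2$-power order $2^p$ are the $l=cr$ with $c$ odd (in particular $l=r$). Writing $\mathrm{ord}(\mu)=2^{a}r'$ with $r'\mid r$, a short $\gcd$ computation gives $\mathrm{ord}(b_{l,l})=2^{\max(a-1,0)}$, independent of the odd integer $c$; moreover an object of smaller $2$-power order yields $b_{l,l}$ of strictly smaller order still. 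Hence $\mathrm{ord}(b_{l,l})<2^p$ for every $l$ of order $2^p$ holds if and only if $a\le p$, i.e. $\mu^{N}=1$, i.e. $[\omega]=0$, which is exactly the stated criterion.
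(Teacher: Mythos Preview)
Your argument is correct, but it is organized differently from the paper's. The paper fixes the explicit representative $\omega_\eta(i,j,k)=\eta^{k}$ for $i+j\ge N$ (and $1$ otherwise), writes down the two hexagon equations, observes that $\omega_\eta$ is symmetric in its first two arguments so the equations simplify, sets $s:=c(1,1)$, derives $c(i,j)=s^{ij}$ inductively from one hexagon, and then reads off $\eta^{i}=s^{iN}$ and $\eta^{-k}=s^{kN}$ from the boundary cases $j+k=N$ and $i+j=N$, forcing $\eta^{2}=1$. You instead run the Eilenberg--MacLane machine: identify braidings with quadratic forms, compute $Q(\Z_N)$ as $\{\mu:\mu^{\gcd(2N,N^2)}=1\}$, and project to $H^{3}$ via the carry cocycle to see the image is $2$-torsion. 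Your framing is more structural and plugs the result into the quadratic-form language used elsewhere in the paper, but note that the step you yourself flag as ``the only non-formal point''---showing $\Psi(q_\mu)=[\omega_{\mu^N}]$---is precisely the hexagon computation the paper performs directly; so at that point you either cite it away or reproduce the paper's calculation. For the ``in particular'' clause both proofs pass to the $2$-primary part and track $2$-adic valuations; your order computation $\mathrm{ord}(b_{l,l})=2^{\max(a-1,0)}$ is cleaner than the paper's (which has some evident typos in the exponents), though you should make explicit that ``objects $l$ of order $2^{p}$'' is meant with $p\ge 1$, since for $p=0$ the unit object trivially violates the strict inequality $\mathrm{ord}(b_{0,0})<1$.
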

\begin{proof}
Without loss of generality, we assume the category is skeletal with simple objects $\{0, 1, 2,\ldots, N-1\}$. We take the following representative $3$-cocycles
$$\omega(i, j, k)=\begin{cases} 1 & i+j<N\\
\eta^{k} & i+j\geq N \end{cases}$$
where $\eta$ is an $N$-th root of unity.

The category admits a braid structure if $c_{i, j}:=c(i, j)\textbf{id}_{i\otimes j}$ satisfies the (hexagon) equations
\begin{equation}
    \omega(j,k,i)c(i, j+k)\omega(i,j,k)=c(i,k)\omega(j,i,k) c(i,j)\end{equation}
\begin{equation}\omega(k,i,j)^{-1}c(i+j, k)\omega(i,j,k)^{-1}=c(i,k)\omega(i,k,j)^{-1}c(j, k).\end{equation}

With our choice of the $3$-cocycles, $\omega(i,j,k)=\omega(j,i,k)$ so the equations simplify to
\begin{equation}\label{3}
    \omega(j,k,i)c(i, j+k)=c(i,k) c(i,j)\end{equation}
\begin{equation}\label{4}
    c(i+j, k)\omega(i,j,k)^{-1}=c(i,k)c(j, k).\end{equation}
Suppose $c(1, 1)=s$, apply the equation \ref{3} inductively, one get $c(i, j)=s^{ij}$ for $0<i<N$ and $0<j<N$.

Choose $j, k$ such that $j+k=N$, equation \ref{3} becomes
$$\eta^i=s^{ik}s^{ij}=s^{iN}.$$

Choose $i, j$ such that $i+j=N$, equation \ref{4} becomes
$$\eta^{-k}=s^{ik}s^{jk}=s^{kN}.$$

Thus $\eta^{2i}=1$ for all $i$, $\eta=1$ or $-1$. Since $\eta$ is an $N$-th root of unity. $\eta=-1$ only if $N$ is even. This proves the first part of the proposition.

In order to prove the second part, we may assume $N=2^q$, the category factors otherwise. Notice $\eta=s^{2^q}$ and $b_{l, l}=s^{2l^2}$. $\eta=1$ if and only if $s$ has order less than $2^{q}$. Since order of $s$ must be power of $2$, the later condition is equivalent to $s^{2l^2}$ has order less than $2^{N-(2N-2p+1)}=2^{2p-1-N}$. The ``only if'' part is obvious and the ``if'' part comes from plugging in $p=N$.
\end{proof}

We get the following corollary immediately:
\begin{corollary}
Suppose $\mathcal{P}$ is a pointed fusion category that admits a braiding. Then its full subcategory $\mathcal{P}^{2}$ consisting of objects of even grades must have trivial associativity constraints.
\end{corollary}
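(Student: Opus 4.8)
The plan is to reduce everything to the criterion in Proposition~\ref{twisting} together with a short computation with orders of roots of unity. Write $A$ for the group of isomorphism classes of simple objects of $\mathcal{P}$; since $\mathcal{P}$ is braided, $A$ is abelian, so $\mathcal{P}\simeq\Vec_A^{\omega}$ for an abelian $3$-cocycle $\omega$ that admits a braiding, and the even-grade subcategory is $\mathcal{P}^2\simeq\Vec_{2A}^{\omega|_{2A}}$ equipped with the restricted braiding. It therefore suffices to show that $\omega|_{2A}$ is cohomologically trivial.

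By (the extension to finite abelian groups of) the ``in particular'' clause of Proposition~\ref{twisting}, $\omega|_{2A}$ is trivial as soon as every $l\in 2A$ of $2$-power order, say $\mathrm{ord}(l)=2^p$, has monodromy scalar $b_{l,l}$ (defined by $c_{l,l}\circ c_{l,l}=b_{l,l}\,\mathbf{id}$) of order strictly less than $2^p$. There is nothing to check for $l$ of non-$2$-power order, and for $p=0$ the object $l$ is the unit; so assume $p\ge 1$. Any such $l$ can be written $l=2a=a^{\otimes 2}$, where $a$ is the $2$-primary part of a preimage of $l$ under multiplication by $2$, so that $\mathrm{ord}(a)=2^{p+1}$.

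Now I would bound $b_{l,l}$. Bimultiplicativity of the monodromy on invertible objects gives $b_{l,l}=b_{a^{\otimes 2},\,a^{\otimes 2}}=b_{a,a}^{\,4}$. Restricting the braiding of $\mathcal{P}$ to the cyclic subcategory $\langle a\rangle\cong\Vec_{\Z_{2^{p+1}}}^{\omega'}$ and running the hexagon computation from the proof of Proposition~\ref{twisting} with $N=2^{p+1}$, the self-braiding $s=c_{a,a}$ satisfies $s^{2^{p+2}}=1$; hence $b_{a,a}=s^{2}$ has order dividing $2^{p+1}$, and therefore $\mathrm{ord}(b_{l,l})=\mathrm{ord}(b_{a,a}^{4})\le 2^{p-1}<2^p$. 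Thus the criterion is met and $\omega|_{2A}$ is trivial, proving the corollary.

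The step I expect to be the main obstacle is the passage from the cyclic statement of Proposition~\ref{twisting} to the arbitrary finite abelian group $2A$: one must know that cohomological triviality of a braiding-admissible abelian $3$-cocycle on a finite abelian group is detected by the order condition on $b_{l,l}$ as $l$ ranges over elements of prime-power order, equivalently that it can be checked on cyclic subcategories, the ``cross'' contributions to abelian cohomology being symmetric bicharacters and hence already realized on $\Vec_{2A}$. This is standard (cf.\ \cite[Exercise 8.4.3]{EGNO}), and I would isolate it as a preliminary lemma before invoking it.
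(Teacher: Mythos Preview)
Your argument is correct, but it is considerably more elaborate than what the paper has in mind. The corollary sits in an appendix entitled ``Braidings over $\Z_N$'' and is declared to follow \emph{immediately} from Proposition~\ref{twisting}; in context $\mathcal{P}$ is $\Vec_{\Z_N}^{\omega}$ with the explicit cocycle $\omega_\eta$ and $\eta^2=1$. The paper's implicit proof is then a single observation: for $i,j,k$ all even one has $\omega_\eta(i,j,k)\in\{1,\eta^k\}$, and $\eta^k=(\pm 1)^k=1$ since $k$ is even, so the restricted cocycle is literally the constant $1$. No appeal to the monodromy criterion or to orders of $b_{l,l}$ is needed.

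Your route---writing $l=2a$ with $\mathrm{ord}(a)=2^{p+1}$, bounding $s=c_{a,a}$ by $s^{2^{p+2}}=1$, and deducing $\mathrm{ord}(b_{l,l})\le 2^{p-1}$---is a clean and correct use of the ``in particular'' clause, and it has the genuine advantage of applying to an arbitrary finite abelian $A$ rather than just $\Z_N$. The cost is exactly the obstacle you flag: the criterion in Proposition~\ref{twisting} is stated only for cyclic groups, so you must first upgrade it to general abelian $A$ (equivalently, reduce to cyclic subcategories via the Eilenberg--MacLane description of abelian $3$-cocycles). That is a legitimate preliminary lemma, but it is extra work the paper avoids by staying in the cyclic setting and reading off the answer from the explicit cocycle formula.
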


\begin{corollary}
Let $\mathcal{P}$ be a braided fusion category Grothendieck equivalent to $\Vec_{\Z_N}$. Then $\mathcal{P}$ admit $N$ braid structures.
\end{corollary}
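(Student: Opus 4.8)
The plan is to prove the two claimed braidings over $\Vec_{\Z_N}$ by reducing everything to the hexagon equations for a pointed category and solving them explicitly, exactly as in the proof of Proposition~\ref{twisting}. First I would set up the skeletal model with simple objects $\{0,1,\ldots,N-1\}$ and a candidate braiding $c_{i,j}=c(i,j)\textbf{id}_{i\otimes j}$, recalling from Proposition~\ref{twisting} that with the standard cocycle representative $\omega$ the two hexagon equations simplify to \eqref{3} and \eqref{4}. Applying \eqref{3} inductively shows that $c$ is completely determined by the single scalar $s:=c(1,1)$, via $c(i,j)=s^{ij}$. The point is that a braiding, if it exists, is \emph{pinned down} by one root of unity, so the enumeration problem becomes: for how many values of $s$ do the constraints \eqref{3}--\eqref{4} actually hold?

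Next I would extract the constraints on $s$. Taking $j+k=N$ in \eqref{3} and $i+j=N$ in \eqref{4} (the only cases where $\omega$ is nontrivial) gives $\eta^i=s^{iN}$ and $\eta^{-k}=s^{kN}$ for all relevant indices, which forces $s^N=\eta^{\pm 1}$ and hence that $s$ is a $2N$-th root of unity with $s^N=\pm 1$ matching $\eta$. Conversely, once $\omega$ is fixed to be one of the $N$ cohomology classes (for $N$ odd only the trivial one admits a braiding, for $N$ even there are two classes $\eta=\pm1$ by the first part of the proposition, but \emph{each} such class must be counted with its multiplicity of compatible $s$), I would check directly that $c(i,j)=s^{ij}$ with $s^N=\eta$ satisfies \eqref{3} and \eqref{4} for \emph{all} $i,j$, not just the boundary cases --- this is the routine verification that the inductively-derived formula is globally consistent. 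The counting then goes: for $N$ odd, $\eta=1$ and $s$ ranges over the $N$-th roots of unity, giving $N$ braidings; for $N$ even, $s$ ranges over the $2N$-th roots of unity with $s^N=\pm1$, i.e. all $2N$-th roots of unity, but the resulting braided categories for $s$ and $-s$ sit on cohomologous-or-equal $\omega$ and one must see that distinct $s$ give distinct braidings on the same underlying category --- actually the cleanest bookkeeping is that the braidings on $\Vec_{\Z_N}$ (ranging over \emph{all} associativity twists) are classified by quadratic forms $q\colon \Z_N\to \bbC^\times$, $q(i)=s^{i^2}$ with $s$ a $2N$-th root of unity, and there are exactly $N$ such forms because $s\mapsto q$ has a two-element kernel when $N$ is even and is injective when $N$ is odd.

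Concretely, the argument I would write is: a braiding on some $\Vec_{\Z_N}^\omega$ is equivalent to a pre-metric group structure on $\Z_N$ (cf.\ \cite{EGNO} Section 8.4), and pre-metric group structures on a cyclic group $\Z_N$ are in bijection with nondegenerate-or-not quadratic forms $q$ on $\Z_N$, of which there are exactly $N$ (the quadratic forms on $\Z_N$ valued in $\bbC^\times$ form a group of order $N$). Summing over all $\omega$, the total count is $N$; since for each fixed admissible $\omega$ the number of compatible braidings is the number of $s$ with $s^{2}{}^{(\cdot)}$ realizing that $\omega$, and these partition the $N$ quadratic forms, the previous corollary's count of $m$ braidings for the $\omega$-trivial case and the main-text count are all consistent. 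For the purposes of this corollary it suffices to say: by Proposition~\ref{twisting} the admissible $\omega$ are enumerated, and a direct hexagon computation shows the number of braidings compatible with a given admissible $\omega$ equals the number of square roots in $\mu_{2N}$ of the corresponding $\eta\in\{\pm1\}$ modulo the trivial ones, and these numbers sum to $N$.

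I expect the main obstacle to be purely bookkeeping rather than conceptual: making sure the count does not double- or under-count when $N$ is even, i.e.\ correctly accounting for the fact that two values of the scalar $s$ (namely $s$ and $-s$, or more precisely $s$ and $\zeta s$ for $\zeta$ in a suitable subgroup) can yield the \emph{same} braided category up to braided equivalence, versus genuinely distinct braidings on the same fixed skeletal fusion category. The safest route --- and the one I would take in the final write-up --- is to avoid this subtlety entirely by invoking the classification of pointed braided fusion categories by pre-metric groups (\cite{EGNO}, already cited in the paper's Example on pointed categories), under which braidings on all twists of $\Vec_{\Z_N}$ correspond bijectively to quadratic forms $\Z_N\to\bbC^\times$, and then simply counting that there are exactly $N$ such forms on the cyclic group $\Z_N$. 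This makes the corollary a one-line consequence of Proposition~\ref{twisting} together with the standard structure theory, with no delicate casework.
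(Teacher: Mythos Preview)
Your first two paragraphs are essentially the paper's own proof and are correct: fix the underlying monoidal category $\Vec_{\Z_N}^\omega$ (this is fixed once $\mathcal{P}$ is given), reduce any braiding to the single scalar $s=c(1,1)$ via $c(i,j)=s^{ij}$, and read off from the hexagon relations that the constraint is exactly $s^N=\eta$. For $\eta=1$ there are $N$ such $s$ (the $N$th roots of unity), and for $\eta=-1$ there are again $N$ (the $2N$th roots of unity not lying in $\mu_N$). That is the complete argument; the paper does nothing more than this case split.

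Where your proposal goes wrong is in the third and fourth paragraphs, and the error is a misreading of the statement. The corollary fixes a single braided $\mathcal{P}$, hence a \emph{single} $\omega$, and asks for the number of braidings on \emph{that} underlying fusion category. You instead start ``summing over all $\omega$'' and asserting the total is $N$. That total is actually $2N$ when $N$ is even: there are $2N$ quadratic forms on $\Z_N$ valued in $\bbC^\times$ (the form $q(i)=s^{i^2}$ is well-defined iff $s^{2N}=1$, and $s\mapsto q$ is injective since $q(1)=s$), split as $N$ forms over each of the two admissible cohomology classes $\eta=\pm 1$. So your claimed ``two-element kernel'' for $s\mapsto q$ is false, your count of quadratic forms is off by a factor of $2$ in the even case, and the pre-metric-group shortcut as you have phrased it would prove the wrong statement. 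Drop paragraphs three and four entirely and submit the direct count from your first two paragraphs --- that \emph{is} the paper's proof.
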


\begin{proof} As a monoidal category, $\mathcal{P}$ is monoidal equivalent to $\Vec_{\Z_N}^\omega$ for some 3-cocycle $\omega$ by \cite[Proposition 2.6.1(iii)]{EGNO}.
As in the proof of the proposition, we let $\1$ be the generating object, and $c(1, 1)=s$. We compute the possible braidings case by case. Suppose $N=2^r q$ where $q$ is an odd number.
\begin{enumerate}
    \item [Case 1] $r=0$.
Since $N$ is odd, we have $s^N=1$, there are exactly $N$ different choices of $s$.
\item [Case 2] $r\ge 0$ with trivial $\omega$.
By the proof of the proposition we have $s^N=1$ thus there are exactly $N$ different choices of $s$.
\item [Case 3] $r\ge 0$ with nontrivial $\omega$.
By the proof of the proposition we have $s^N=-1$ so $s$ is a $2N$-th roots of unity, among all the $2N$-th root of unity, half of them satisfy $s^N=-1$.
\end{enumerate}
\end{proof}




\bibliographystyle{plain}
\bibliography{reference}

\begin{thebibliography}{10}

\bibitem{bakalovkirillov}
Bojko Bakalov and Alexander Kirillov, Jr.
\newblock {\em Lectures on tensor categories and modular functors}, volume~21
  of {\em University Lecture Series}.
\newblock American Mathematical Society, Providence, RI, 2001.

\bibitem{bruguieres}
Alain Brugui\'eres.
\newblock {\em Private communication, via hand-written notes}.

\bibitem{BNRW}
Paul Bruillard, Siu-Hung Ng, Eric~C. Rowell, and Zhenghan Wang.
\newblock On classification of modular categories by rank.
\newblock {\em Int. Math. Res. Not. IMRN}, (24):7546--7588, 2016.

\bibitem{davidovitchhaggewang}
Orit Davidovich, Tobias Hagge, and Zhenghan Wang.
\newblock On arithmetic modular categories.
\newblock {\em arXiv preprint arXiv:1305.2229}, 2013.

\bibitem{zesting}
Colleen Delaney, C\'{e}sar Galindo, Julia Plavnik, Eric~C. Rowell, and Qing
  Zhang.
\newblock Braided zesting and its applications.
\newblock {\em Comm. Math. Phys.}, 386(1):1--55, 2021.

\bibitem{edie2020auto}
Cain Edie-Michell.
\newblock Auto-equivalences of the modular tensor categories of type {$ A $, $
  B $, $ C $ and $ G$}.
\newblock {\em arXiv preprint arXiv:2002.03220}, 2020.

\bibitem{ediesimple}
Cain Edie-Michell.
\newblock Simple current auto-equivalences of modular tensor categories.
\newblock {\em Proc. Amer. Math. Soc.}, 148(4):1415--1428, 2020.

\bibitem{EGNO}
Pavel Etingof, Shlomo Gelaki, Dmitri Nikshych, and Victor Ostrik.
\newblock {\em Tensor categories}, volume 205 of {\em Mathematical Surveys and
  Monographs}.
\newblock American Mathematical Society, Providence, RI, 2015.

\bibitem{eno}
Pavel Etingof, Dmitri Nikshych, and Viktor Ostrik.
\newblock On fusion categories.
\newblock {\em Ann. of Math. (2)}, 162(2):581--642, 2005.

\bibitem{gannon2002automorphisms}
Terry Gannon.
\newblock The automorphisms of affine fusion rings.
\newblock {\em Adv. Math.}, 165(2):165--193, 2002.

\bibitem{JS}
Andr\'{e} Joyal and Ross Street.
\newblock Braided tensor categories.
\newblock {\em Adv. Math.}, 102(1):20--78, 1993.

\bibitem{kazdanwenzl}
David Kazhdan and Hans Wenzl.
\newblock Reconstructing monoidal categories.
\newblock In {\em I. {M}. {G}el' fand {S}eminar}, volume~16 of {\em Adv. Soviet
  Math.}, pages 111--136. Amer. Math. Soc., Providence, RI, 1993.

\bibitem{NIKSHYCH18}
Dmitri Nikshych.
\newblock Classifying braidings on fusion categories.
\newblock In {\em Tensor categories and {H}opf algebras}, volume 728 of {\em
  Contemp. Math.}, pages 155--167. Amer. Math. Soc., Providence, RI, 2019.

\bibitem{rowellstongwang}
Eric Rowell, Richard Stong, and Zhenghan Wang.
\newblock On classification of modular tensor categories.
\newblock {\em Comm. Math. Phys.}, 292(2):343--389, 2009.

\bibitem{RowellJPAA}
Eric~C. Rowell.
\newblock Unitarizability of premodular categories.
\newblock {\em J. Pure Appl. Algebra}, 212(8):1878--1887, 2008.

\bibitem{rowellwangbulletin}
Eric~C. Rowell and Zhenghan Wang.
\newblock Mathematics of topological quantum computing.
\newblock {\em Bull. Amer. Math. Soc. (N.S.)}, 55(2):183--238, 2018.

\bibitem{tuba2005braided}
Imre Tuba and Hans Wenzl.
\newblock On braided tensor categories of type {BCD}.
\newblock 2005.

\bibitem{Wenzl1988}
Hans Wenzl.
\newblock On the structure of {B}rauer's centralizer algebras.
\newblock {\em Ann. of Math. (2)}, 128(1):173--193, 1988.

\end{thebibliography}

\end{document}